\documentclass[10pt,a4paper]{article}

\usepackage{epsf,epsfig,amsfonts,amsgen,amsmath,amstext,amsbsy,amsopn,amsthm,amssymb, epstopdf
}
\usepackage{ebezier,eepic}
\usepackage{color}
\usepackage{multirow}
\usepackage{url}

\setlength{\textwidth}{150mm} \setlength{\oddsidemargin}{7mm}
\setlength{\evensidemargin}{7mm} \setlength{\topmargin}{-5mm}
\setlength{\textheight}{245mm} \topmargin -18mm

\newtheorem{theorem}{Theorem}[section]

\newtheorem{lemma}{Lemma}
\newtheorem{false statement}{False statement}

\theoremstyle{definition}
\newtheorem{definition}{Definition}
\newtheorem{claim}{Claim}

\newtheorem{remark}[claim]{Remark}

\newtheorem{corollary}[claim]{Corollary}

\baselineskip 15pt

\begin{document}

\title{\bf\Large Spanning trees in complete bipartite graphs and
resistance distance in nearly complete bipartite graphs}
\date{}

\author{Jun Ge\thanks{School of Mathematical Sciences,
Sichuan Normal University, Chengdu, 610066, Sichuan, P. R. China.
Email: mathsgejun@163.com}~ and Fengming Dong\thanks{Mathematics and Mathematics Education,
National Institute of Education,
Nanyang Technological University,
Singapore. Email:
fengming.dong@nie.edu.sg}}

\maketitle

\begin{abstract}
Using the theory of electrical network, we first obtain
a simple formula for the number of spanning trees of a complete bipartite graph
containing a certain matching or a certain tree.
Then we apply the effective resistance (i.e.,
resistance distance in graphs)
to find a formula for the number of spanning trees
in the nearly complete bipartite graph
$G(m,n,p)=K_{m,n}-pK_2$ $(p\leq \min\{m,n\})$,
which extends a recent result by
Ye and Yan who obtained the effective resistances and the
number of spanning trees in $G(n,n,p)$.
As a corollary, we obtain the Kirchhoff index of $G(m,n,p)$
which extends a previous result by Shi and Chen.

\medskip

\noindent {\bf Keywords:} spanning tree; complete bipartite graph; electrical network; effective resistance

\smallskip
\noindent {\bf Mathematics Subject Classification (2010): 05C30, 05C05}
\end{abstract}

\section{Introduction}

Throughout this paper we assume that all graphs
considered are loopless, while parallel edges
are allowed. Let $G=(V, E)$ be a graph. For
any $e\in E(G)$, let $G-e$ be the graph obtained
from $G$ by deleting $e$. Let $G/e$ be the edge
contraction of $G$ by $e$, with loops deleted.
For any vertex set $S\subseteq V(G)$, let $G/S$ be
the graph obtained from $G$ by identifying all the
vertices in $S$, with resulting loops deleted.
Similarly, for any edge set $F\subseteq E(G)$,
let $G/F$ be the graph obtained from $G$ by
contracting all the edges in $F$, with resulting loops deleted.

Suppose $G$ is a weighted graph with edge weight $w_e$ for each edge $e$.
For any $F\subseteq E$,
define $w_F=\prod\limits_{e\in F} w_e$.
Let $\mathcal{T}(G)$ denote the set of spanning trees of $G$
and let
$\tau(G)=\sum\limits_{T\in \mathcal{T}(G)} w_T$.
It is easy
to see that for an unweighted graph $G$
(weighted graph with unit weight on
each edge), $\tau(G)$ is the number of spanning trees.
The first famous result for $\tau(G)$ is the Cayley's formula.

\begin{theorem}[Cayley's formula, \cite{Cayley}]\label{Cayley}
$$\tau(K_n)=n^{n-2}.$$
\end{theorem}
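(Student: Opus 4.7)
The plan is to invoke the Matrix-Tree Theorem (Kirchhoff's theorem), which writes $\tau(G)=\frac{1}{n}\prod_{i=1}^{n-1}\mu_i$, where $\mu_1,\ldots,\mu_{n-1}$ are the nonzero Laplacian eigenvalues of $G$. For $G=K_n$ this reduces the problem to an eigenvalue calculation for a highly structured matrix, which aligns naturally with the spectral/electrical-network viewpoint of the paper.

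First I would observe that $L(K_n)=(n-1)I_n-(J_n-I_n)=nI_n-J_n$, where $I_n$ is the identity and $J_n$ is the all-ones $n\times n$ matrix. Since $J_n$ has rank one, with eigenvalue $n$ on the all-ones vector $\mathbf{1}$ and eigenvalue $0$ on the hyperplane $\mathbf{1}^{\perp}$, the matrix $L(K_n)$ has eigenvalue $0$ on $\mathbf{1}$ (as expected for any connected graph) and eigenvalue $n$ with multiplicity $n-1$ on $\mathbf{1}^{\perp}$. Substituting into the Matrix-Tree formula gives
$$\tau(K_n)=\frac{1}{n}\cdot n^{n-1}=n^{n-2}.$$

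The main obstacle is essentially nonexistent once the Matrix-Tree Theorem is granted; the rank-one structure of $J_n$ makes the eigenvalue decomposition immediate. If one prefers to avoid the spectral form, one can instead compute any cofactor of $L(K_n)$ directly: the determinant of the $(n-1)\times (n-1)$ matrix $nI_{n-1}-J_{n-1}$ is evaluated by adding all rows to the first and expanding, yielding the same value $n^{n-2}$. A purely combinatorial alternative would be Pr\"ufer's bijection between spanning trees of $K_n$ and sequences in $\{1,\ldots,n\}^{n-2}$, obtained by iteratively deleting the smallest-labeled leaf and recording its unique neighbor; this route avoids linear algebra entirely but is less in the spirit of the rest of the paper.
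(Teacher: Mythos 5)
Your proof is correct, but note that the paper does not prove this statement at all: Cayley's formula is quoted as a classical result with a citation to Cayley's 1889 paper, and the text explicitly treats it as the special case of Moon's theorem (Theorem \ref{forest}) with $F$ the empty forest. Your Matrix-Tree argument is the standard spectral one and checks out: $L(K_n)=nI_n-J_n$, the rank-one structure of $J_n$ gives eigenvalues $0$ (on $\mathbf{1}$) and $n$ with multiplicity $n-1$ (on $\mathbf{1}^{\perp}$), and $\tfrac{1}{n}\cdot n^{n-1}=n^{n-2}$. It also fits the paper's toolkit, since the cofactor form of Kirchhoff's Matrix-Tree Theorem (Theorem \ref{Matrix-Tree}) and the determinant identity $\det(aI+(b)(J-I))$-type evaluation are exactly what the paper uses in its second proof of Theorem \ref{tree}; your cofactor alternative $\det(nI_{n-1}-J_{n-1})=1\cdot n^{n-2}$ is the same computation in miniature. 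The Pr\"ufer bijection you mention is a valid purely combinatorial alternative, though, as you say, it is further from the electrical-network spirit of the rest of the paper.
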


For a subgraph $H$ of $G$,
let $\mathcal{T}_{H}(G)$ denote
the set of spanning trees of $G$ containing
all edges in $H$ and let
$\tau_{H}(G)=\sum\limits_{T\in \mathcal{T}_{H}(G)} w_T$.
For unweighted graph $G$, $\tau_{H}(G)$ is
the number of spanning trees of $G$ containing $H$.
The following beautiful formula due to Moon generalizes Cayley's
formula by obtaining the number of spanning trees of $K_n$
containing a given spanning forest.

\begin{theorem}[\cite{Moon}, also see Problem 4.4 in \cite{Lovasz}]\label{forest}
For any spanning forest $F$ of $K_n$, if $c$ is the number of components
of $F$ and $n_1, n_2, \ldots, n_c$ are the orders of those components, then
$$\tau_{F}(K_n)=n^{c-2}\prod_{i=1}^c n_i.$$
\end{theorem}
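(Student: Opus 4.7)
The plan is to reduce the count to a weighted spanning-tree count on a smaller multigraph obtained by contracting the forest, and then to evaluate that count via a Pr\"ufer-sequence argument.

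First, I would contract every edge of $F$. Let $G = K_n/E(F)$ with loops deleted; since the components of $F$ partition $V(K_n)$, $G$ has $c$ vertices $v_1,\ldots,v_c$ (one per component of $F$), and exactly $n_in_j$ parallel edges join $v_i$ and $v_j$ (one for each choice of an original vertex in component $i$ and one in component $j$, all of which are edges of $K_n$). I would then set up the bijection $T \mapsto T/E(F)$ between $\mathcal{T}_F(K_n)$ and $\mathcal{T}(G)$: a spanning tree of $K_n$ contains $F$ if and only if contracting $F$ inside it yields a spanning tree of $G$, and the inverse simply glues $F$ back in. Therefore $\tau_F(K_n) = \tau(G)$, and $\tau(G)$ equals the sum, over spanning trees $T$ of the simple graph $K_c$, of $\prod_{v_iv_j\in T} n_i n_j$.

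Next I would prove the weighted identity
$$\sum_{T \in \mathcal{T}(K_c)} \prod_{v_iv_j \in T} n_i n_j \;=\; n^{c-2} \prod_{i=1}^c n_i.$$
Rewriting $\prod_{v_iv_j\in T} n_in_j = \prod_{i=1}^c n_i^{\deg_T(v_i)}$ and factoring out $\prod_i n_i$ (using $\sum_i \deg_T(v_i) = 2(c-1)$ only indirectly), it suffices to show
$$\sum_{T \in \mathcal{T}(K_c)} \prod_{i=1}^c n_i^{\deg_T(v_i)-1} \;=\; n^{c-2}.$$
This I would obtain from the classical Pr\"ufer correspondence between $\mathcal{T}(K_c)$ and sequences $(p_1,\ldots,p_{c-2}) \in \{1,\ldots,c\}^{c-2}$, under which each index $i$ appears exactly $\deg_T(v_i)-1$ times. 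The left-hand side thus equals
$$\sum_{(p_1,\ldots,p_{c-2})} \prod_{k=1}^{c-2} n_{p_k} \;=\; \Bigl(\sum_{i=1}^c n_i\Bigr)^{c-2} \;=\; n^{c-2},$$
completing the proof.

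The only conceptual point that requires care is the contraction step: one must verify that $K_n/E(F)$ retains all parallel edges between distinct super-vertices (only intra-component edges become loops and get deleted), so that the multiplicity $n_in_j$ is correct. Once this is in place, the counting is routine via Pr\"ufer. An alternative to the Pr\"ufer step would be to apply the Matrix-Tree Theorem to the Laplacian of the weighted $K_c$ with entries $L_{ii}=n_i(n-n_i)$ and $L_{ij}=-n_in_j$, but I expect the Pr\"ufer argument to be cleaner and more in the spirit of Moon's original proof.
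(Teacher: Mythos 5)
Your argument is correct and complete, but there is nothing in the paper to compare it against: the paper states this result (Theorem~2) purely as a quoted classical theorem of Moon, with a pointer to Problem~4.4 of Lov\'asz, and gives no proof of it. (The paper does use the identity $\tau_F(G)=\tau(G/F)$ later, in Sections~3 and~4, which is exactly your first reduction.) Your proof is the standard one: contracting $F$ yields a multigraph on $c$ vertices with $n_in_j$ parallel edges between $v_i$ and $v_j$, the contraction map is a bijection from $\mathcal{T}_F(K_n)$ to the spanning trees of that multigraph, and the resulting weighted count
$$\sum_{T\in\mathcal{T}(K_c)}\prod_{i=1}^{c}n_i^{\deg_T(v_i)}=\Bigl(\prod_{i=1}^{c}n_i\Bigr)\Bigl(\sum_{i=1}^{c}n_i\Bigr)^{c-2}$$
is the Cayley--Pr\"ufer generating-function identity, which you invoke correctly (each label $i$ occurs $\deg_T(v_i)-1$ times in the Pr\"ufer sequence). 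The only detail worth a sentence in a final write-up is the degenerate case $c=1$, where the Pr\"ufer correspondence does not apply but both sides of the formula equal $1$ directly; for $c\ge 2$ everything you wrote goes through. Your proposed Matrix-Tree alternative with $L_{ii}=n_i(n-n_i)$ and $L_{ij}=-n_in_j$ would also work and is closer in spirit to the tools the paper actually uses elsewhere (Theorem~\ref{Matrix-Tree} and Lemma~\ref{Schur}), but the Pr\"ufer route is self-contained and arguably cleaner.
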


Note that Cayley's formula is the special case of Theorem \ref{forest}
that $F$ is an empty graph.

It is also well known that
$\tau(K_{m, n})=m^{n-1}n^{m-1}$
for any complete bipartite graph $K_{m, n}$
by Fiedler and Sedl\'{a}\v{c}ek \cite{FS}.
In this paper,
we prove the following two theorems as bipartite
analogues of Moon's formula (Theorem \ref{forest}).

\begin{theorem}\label{matching}
For any matching $M$ of size $k$ in $K_{m, n}$,
$$\tau_M(K_{m, n})=(m+n)^{k-1}(m+n-k)m^{n-k-1}n^{m-k-1}.$$
\end{theorem}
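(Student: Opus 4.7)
The plan is to reduce, via the standard identity $\tau_M(G)=\tau(G/M)$ for an acyclic edge set $M$, to computing the number of spanning trees of the multigraph $H:=K_{m,n}/M$, and then to diagonalise its Laplacian by hand using the large automorphism group of $H$.

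First I would describe $H$ explicitly. Writing $M=\{x_iy_i:1\le i\le k\}$ and letting $u_i$ be the vertex obtained by contracting $x_iy_i$, label the remaining $X$-vertices $a_1,\dots,a_{m-k}$ and the remaining $Y$-vertices $b_1,\dots,b_{n-k}$, and set $U=\{u_i\}$, $A=\{a_\ell\}$, $B=\{b_j\}$. In $H$, distinct $u_i,u_j$ are joined by a double edge (one copy each from $x_iy_j$ and $x_jy_i$); each of the pairs $\{u_i,a_\ell\}$, $\{u_i,b_\ell\}$ and $\{a_\ell,b_j\}$ supports a single edge; and no $A$-$A$ or $B$-$B$ edges appear. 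Writing the Laplacian $L(H)$ in the block order $(U,A,B)$ then makes its symmetries transparent, with diagonal entries $m+n-2$, $n$, $m$ on the three blocks.

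Next I would diagonalise $L(H)$ using the action of $S_k\times S_{m-k}\times S_{n-k}$. Vectors supported on $U$ (respectively on $A$, on $B$) that sum to zero form invariant subspaces on which $L(H)$ acts by the scalars $m+n$, $n$, $m$, with dimensions $k-1$, $m-k-1$, $n-k-1$ respectively; these eigenvalue computations are one-line checks. The orthogonal complement is the $3$-dimensional space of vectors constant on each of $U$, $A$, $B$; on it $L(H)$ reduces to an explicit $3\times 3$ matrix whose kernel is spanned by $(1,1,1)$ and whose remaining two eigenvalues, read off from the trace $2(m+n-k)$ and the sum of its $2\times 2$ principal minors $(m+n-k)^{2}$, are both forced to equal $m+n-k$.

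Finally, Kirchhoff's matrix-tree theorem in the form $\tau(H)=\frac{1}{|V(H)|}\prod_{\lambda\ne 0}\lambda$ with $|V(H)|=m+n-k$ assembles the pieces into
\[
\tau(H)=\frac{(m+n)^{k-1}\,n^{m-k-1}\,m^{n-k-1}\,(m+n-k)^{2}}{m+n-k}=(m+n)^{k-1}(m+n-k)m^{n-k-1}n^{m-k-1},
\]
which is the claimed value of $\tau_M(K_{m,n})$. I expect the main (though still mechanical) obstacle to be the $3\times 3$ trace-and-minor computation showing that the two nontrivial eigenvalues from the symmetric block collapse to a single value $m+n-k$; the rest is produced by symmetry or by direct inspection.
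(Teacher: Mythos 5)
Your proposal is correct, but it takes a genuinely different route from the paper. The paper proves Theorem \ref{matching} by a telescoping argument in $k$: it first computes $\tau_1(K_{m,n})$ by double counting (Lemma \ref{t1}), then obtains the ratio $\tau_{k+1}/\tau_k$ as an effective resistance in $K_{m,n}/M$ by explicitly guessing a current assignment on every edge class and verifying Kirchhoff's current and voltage laws, and finally multiplies the ratios. You instead compute $\tau(K_{m,n}/M)$ in one shot by diagonalising the Laplacian of the contracted multigraph via its $S_k\times S_{m-k}\times S_{n-k}$ symmetry and applying the eigenvalue form of the matrix--tree theorem. I checked your key numbers: the quotient matrix on block-constant vectors is
$\left(\begin{smallmatrix} m+n-2k & -(m-k) & -(n-k)\\ -k & n & -(n-k)\\ -k & -(m-k) & m\end{smallmatrix}\right)$,
with trace $2(m+n-k)$ and sum of principal $2\times 2$ minors $(m+n-k)^2$, so its two nonzero eigenvalues are both $m+n-k$, and
$\frac{1}{m+n-k}(m+n)^{k-1}n^{m-k-1}m^{n-k-1}(m+n-k)^2$ is exactly the claimed value. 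Your method buys a self-contained computation that needs no inspired guess of a current distribution and yields the full Laplacian spectrum of $K_{m,n}/M$ as a by-product; the paper's method buys thematic unity with the rest of its electrical-network machinery (the same resistance-to-tree-ratio identity drives Sections 4 and 5). One small caveat: when $k=\min\{m,n\}$ one of your sum-zero eigenspaces is empty and the block-constant space is only $2$-dimensional, so that boundary case needs a one-line separate check (the eigenvalues there are $0$ and $m+n-k$, and the formula still comes out right); alternatively restrict the spectral argument to $k\le\min\{m,n\}-1$ and handle the extreme case directly.
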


\begin{theorem}\label{tree}
Let $T$ be any tree which is a subgraph of $K_{m, n}$.
Then
$$
\tau_T(K_{m, n})=(sn+tm-st)m^{n-t-1}n^{m-s-1},
$$
where $s=|V(T)\cap X|$, $t=|V(T)\cap Y|$,
and $(X, Y)$ is the bipartition
of $K_{m, n}$ with $|X|=m$ and $|Y|=n$.
\end{theorem}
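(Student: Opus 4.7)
The plan is to reduce the problem, via the standard forest-contraction bijection, to counting spanning trees of a small multigraph, and then to apply the Matrix-Tree Theorem. Since $T$ is a subtree (in particular a subforest) of $K_{m,n}$, contracting all edges of $T$ gives a bijection between spanning trees of $K_{m,n}$ containing $T$ and spanning trees of the contracted multigraph $G^{*}:=K_{m,n}/T$. Thus $\tau_T(K_{m,n})=\tau(G^{*})$, and it suffices to compute $\tau(G^{*})$.

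The first step is to unpack the structure of $G^{*}$. Let $v_0$ denote the vertex obtained by identifying the $s+t$ vertices of $T$; the remaining $m-s$ vertices of $X$ and $n-t$ vertices of $Y$ are left untouched. A direct edge count shows that $G^{*}$ has $t$ parallel edges joining $v_0$ to each remaining $x\in X$ (one for each vertex of $V(T)\cap Y$), $s$ parallel edges joining $v_0$ to each remaining $y\in Y$, and exactly one edge between each remaining pair $(x,y)$. Forming the Laplacian of $G^{*}$ and deleting the row and column indexed by $v_0$ (as permitted by Matrix-Tree), the surviving principal minor has the block form
$$
M=\begin{pmatrix} nI_{m-s} & -J_{(m-s)\times (n-t)} \\ -J_{(n-t)\times (m-s)} & mI_{n-t}\end{pmatrix},
$$
where $I$ and $J$ denote identity and all-ones matrices of the indicated sizes.

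The remaining step is a clean determinant evaluation. The Schur-complement identity gives
$$
\det M \;=\; n^{m-s}\,\det\!\Big(mI_{n-t}-\tfrac{m-s}{n}\,J_{n-t}\Big),
$$
and the inner matrix is a rank-one perturbation of $mI_{n-t}$ with eigenvalues $m-\tfrac{(m-s)(n-t)}{n}$ (on the all-ones vector) and $m$ (with multiplicity $n-t-1$). Multiplying these and using the algebraic identity $mn-(m-s)(n-t)=sn+tm-st$ produces $\tau(G^{*})=n^{m-s-1}m^{n-t-1}(sn+tm-st)$, matching the claim. The only substantive obstacle is this determinant computation, and it is unusually clean precisely because $T$ is connected and therefore collapses to a \emph{single} vertex: that is what forces the reduced Laplacian into a two-block shape whose Schur complement is a scalar-plus-rank-one matrix, diagonalizable by inspection.
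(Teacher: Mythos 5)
Your proof is correct and coincides essentially verbatim with the paper's own ``Second proof of Theorem \ref{tree}'': contract $T$ to a single vertex, delete that vertex's row and column from the Laplacian, and evaluate the resulting two-block determinant by Schur complement, the only (immaterial) difference being that you complement against the $nI_{m-s}$ block while the paper complements against $mI_{n-t}$. (The paper's primary proof is a different, electrical-network argument via effective resistances, but your route is already one of the two given there.)
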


Effective resistance is a concept used in electric circuit
analysis to define the equivalent resistance between
two points in an electric network. In graph theory
it is also known as the resistance distance \cite{KR}
between two vertices of a weighted connected
graph. Here the graph considered can be viewed as an
electric network, where edge $uv$ with edge weight
$w_{uv}>0$ corresponds to a resistor with resistance
$r_{uv}=\frac{1}{w_{uv}}$, with a voltage source
connected to a pair of vertices.

Recently, Ye and Yan \cite{YY} studied the resistance
distance between any two vertices of
the graph $K_{n,n}-pK_2$
$(p\leq n)$. In this paper, we extend their
results to any nearly complete bipartite graph
$K_{m,n}-pK_2$ $(p\leq \min\{m,n\})$,
that is,
the graph obtained from the complete bipartite graph $K_{m, n}$
by deleting a matching of size $p$.

As a corollary, the number of spanning trees of any nearly complete bipartite graph
is obtained as follows.

\begin{theorem}\label{nearly}
For any integer $p\le \min\{m,n\}$,
$$
\tau(K_{m,n}-pK_2)=(mn-m-n+p)(mn-m-n)^{p-1}m^{n-p-1}n^{m-p-1}.
$$
\end{theorem}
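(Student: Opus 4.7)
My plan is to derive Theorem \ref{nearly} directly from Theorem \ref{matching} via inclusion--exclusion over the subsets of the removed matching, entirely bypassing the electrical--network machinery used elsewhere in the paper.

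Let $M=\{e_1,\dots,e_p\}$ denote the removed matching and write $\tau_S(K_{m,n})$ for the number of spanning trees of $K_{m,n}$ containing all edges of $S$. A standard sieve, obtained by expanding $\prod_{e\in M}(1-\mathbf{1}_{e\in T})$ and summing over spanning trees $T$ of $K_{m,n}$, gives
$$
\tau(K_{m,n}-pK_2)=\sum_{S\subseteq M}(-1)^{|S|}\tau_S(K_{m,n}).
$$
Since every subset of $M$ is still a matching, Theorem \ref{matching} implies each $\tau_S(K_{m,n})$ depends only on $|S|=k$, so collecting by $k$ turns the right side into the explicit single-variable binomial sum
$$
\sum_{k=0}^{p}(-1)^k\binom{p}{k}(m+n)^{k-1}(m+n-k)m^{n-k-1}n^{m-k-1},
$$
where the $k=0$ term recovers $\tau(K_{m,n})=m^{n-1}n^{m-1}$ in agreement with Fiedler--Sedl\'a\v cek, since the general formula at $k=0$ reads $(m+n)^{-1}(m+n)m^{n-1}n^{m-1}$.

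The remainder is purely algebraic. Setting $a=m+n$, $b=mn$ and factoring out $m^{n-p-1}n^{m-p-1}$, the claim reduces to the identity
$$
\sum_{k=0}^{p}(-1)^k\binom{p}{k}a^{k-1}(a-k)b^{p-k}=(b-a+p)(b-a)^{p-1}.
$$
I would prove this by splitting $a^k(a-k)=a^{k+1}-ka^k$: the first piece evaluates to $a(b-a)^p$ by the binomial theorem, and the second, via $k\binom{p}{k}=p\binom{p-1}{k-1}$ and an index shift, evaluates to $-pa(b-a)^{p-1}$. Summing the two contributions and dividing by $a$ then yields $(b-a)^{p-1}(b-a+p)$, as required.

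The only non-routine step is this binomial identity, which becomes elementary once the variables are repackaged as $(a,b)$. The single bookkeeping concern is the boundary case $p\in\{m,n\}$, where the exponents $m-p-1$ or $n-p-1$ in the target formula become negative; one reads the expression as a Laurent polynomial and checks in small cases (for instance $K_{2,2}-2K_2$ with $\tau=0$, matched by the vanishing factor $(mn-m-n)^{p-1}$, and $K_{3,3}-3K_2=C_6$ with $\tau=6$) that it still produces the correct value of $\tau(K_{m,n}-pK_2)$.
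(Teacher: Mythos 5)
Your proof is correct, and it takes a genuinely different route from the paper. The paper proves Theorem \ref{nearly} by first solving a system of linear equations (built from Foster's theorem and the Chen--Zhang local sum rules) to obtain all effective resistances of $G(m,n,p)$, then using $\tau(G(m,n,p+1))/\tau(G(m,n,p))=1-r_{11}$ with $r_{11}=R_{x_{p+1}y_{p+1}}$ and telescoping over $p$. You instead observe that $\tau(K_{m,n}-pK_2)$ counts spanning trees of $K_{m,n}$ avoiding the matching $M$, apply inclusion--exclusion $\tau(K_{m,n}-pK_2)=\sum_{S\subseteq M}(-1)^{|S|}\tau_S(K_{m,n})$, and feed in Theorem \ref{matching}; the resulting binomial sum collapses to $(b-a+p)(b-a)^{p-1}$ with $a=m+n$, $b=mn$ exactly as you describe (the paper explicitly remarks on this ``duality'' but does not carry it out). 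Your route is shorter and more self-contained given Theorem \ref{matching}, and it avoids the electrical-network computation entirely; the paper's route is heavier but buys the full table of resistance distances and hence the Kirchhoff index of $G(m,n,p)$, which your sieve does not produce. Two small points to tidy: there is a sign-bookkeeping slip in your prose --- the sum $\sum_k(-1)^k k\binom{p}{k}a^kb^{p-k}$ does equal $-pa(b-a)^{p-1}$, but it enters the split $a^{k+1}-ka^k$ with a minus sign, so the two contributions are $a(b-a)^p$ and $+pa(b-a)^{p-1}$, whose sum divided by $a$ gives the stated $(b-a)^{p-1}(b-a+p)$; and for the boundary case $p=\min\{m,n\}$ it is cleaner to note that the whole identity, multiplied through by $m^{p+1-n}n^{p+1-m}(m+n)$, is a polynomial identity in $m,n$, so the negative exponents cause no difficulty.
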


Theorem \ref{nearly} can be viewed as a ``dual'' result of Theorem \ref{matching},
since $\tau(K_{m,n}-pK_2)$
equals the number of those spanning trees of $K_{m,n}$ contain
no edge of a fixed matching of size $p$.

Shi and Chen~\cite{SC}
studied the problem of computing resistance
distances and Kirchhoff index (which will be introduced in the next section)
of graphs with an involution
and obtained the Kirchhoff index of $G(n,n,p)=K_{n,n}-pK_2$.
We extend their result to all nearly complete bipartite graphs $G(m,n,p)=K_{m,n}-pK_2$.

\section{Electrical network}

The main tool used in proving our results is the theory of
electrical network. In this section, we introduce
some basics of the theory of electrical network
and a nice relation between electrical network and
spanning trees of graphs.

For readers who are not familiar with basics of electricity,
we refer to \cite{Gervacio}. Doyle and Snell's book \cite{DS}
provides rich materials on random walks and electric networks.
Wagner's excellent notes \cite{Wagner} also contain further materials.
Vos's master thesis \cite{Vos} is a well written survey on how to determining the
effective resistance.

For any graph $G$, we view each edge as a resistor with
some assigned resistance. If a voltage source
is connected to two vertices of $G$, then each vertex
receives an electric potential, each edge receives
a current flow (possibly 0 for some edges), and the graph
becomes an electrical network. Followings
are some notations:
\begin{enumerate}
\renewcommand{\theenumi}{\rm (\roman{enumi})}

\item the electric potential at point $a$ is denoted by $v_a$;

\item the voltage, electric potential difference, or electric pressure from $a$ to $b$ is denoted by $U_{ab}=v_a-v_b$;

\item the electric current through a conductor represented by the edge $ab$ (from $a$ to $b$)
is denoted by $I_{ab}$;

\item the electrical resistance of a conductor represented by the edge $ab$ is denoted by $r_{ab}$.
\end{enumerate}

Then we know $U_{ab}=-U_{ba}$, $I_{ab}=-I_{ba}$, and $r_{ab}=r_{ba}$.

Now we introduce some basic laws in the theory of
electrical network.

\begin{theorem}[Ohm's law]\label{Ohm}
For the conductor between two points $a$ and $b$,
$$U_{ab}=I_{ab}r_{ab}.$$
\end{theorem}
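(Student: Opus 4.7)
Ohm's law is not a theorem to be derived from more elementary mathematical hypotheses within the framework set up in the excerpt; it is the foundational postulate that converts the combinatorial object (a weighted graph) into a physical/analytic object (an electrical network). My ``proof'' would therefore consist of recognizing this and making the axiomatic status explicit, rather than trying to deduce $U_{ab}=I_{ab}r_{ab}$ from prior statements in the paper, because nothing earlier in the text defines current $I_{ab}$ independently of a relationship of this type.

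Concretely, the plan is as follows. First, I would recall the physical motivation: treat each edge $ab$ as a resistor of resistance $r_{ab}=1/w_{ab}$, with an externally applied voltage source at two designated vertices. One then posits that the current through the resistor $ab$ is driven by, and proportional to, the potential difference across it. Second, I would present Ohm's law as the constitutive equation of the model, namely $I_{ab}=U_{ab}/r_{ab}$, equivalently $U_{ab}=I_{ab}r_{ab}$. Third, I would sanity-check the skew-symmetry declared in the excerpt ($U_{ab}=-U_{ba}$, $I_{ab}=-I_{ba}$, $r_{ab}=r_{ba}$) and observe that the equation is consistent under swapping $a$ and $b$: the sign flips on both sides agree, so the law is well posed on unoriented edges.

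The only genuine content I would add, to make the statement mathematically meaningful within the paper's framework, is to fix the convention that $I_{ab}$ denotes the current \emph{defined} by the quotient $U_{ab}/r_{ab}$ when a single resistor is considered in isolation, and then to note that for a multi-edge network this local law, combined with Kirchhoff's current and voltage laws (which the paper presumably introduces shortly after), uniquely determines all edge currents once the boundary voltages are prescribed. In this sense, the ``proof'' is an unpacking of definitions: given the identification of edges with resistors of resistance $r_{ab}$, Ohm's law is the \emph{definition} of how current relates to voltage across that single resistor, and no further derivation is possible or required.

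The main obstacle, such as it is, is conceptual rather than technical: one must resist the temptation to derive an axiom. I would therefore make clear in the write-up that Theorem \ref{Ohm} is stated here for reference and sets up the subsequent use of Kirchhoff's laws and the series/parallel reduction rules that will be needed to compute effective resistances in $G(m,n,p)$ later in the paper.
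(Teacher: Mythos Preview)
Your assessment is correct and matches the paper's treatment: the paper states Ohm's law (together with Kirchhoff's current and voltage laws) as one of the ``basic laws in the theory of electrical network'' and offers no proof, treating it as a foundational postulate of the model rather than a derived result. Your recognition that nothing prior in the text defines $I_{ab}$ independently, and hence that the statement functions as a definition/axiom, is exactly right.
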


\begin{theorem}[Kirchhoff's current law]\label{current}
The algebraic sum of currents in a network of conductors meeting at a point is zero.
\end{theorem}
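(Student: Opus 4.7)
The plan is to derive Kirchhoff's current law from the physical principle of conservation of electric charge at steady state. The key observation is that at any vertex which is not connected to an external voltage source, electric charge cannot accumulate indefinitely once the network has reached equilibrium; translating this statement into the language of currents on incident edges produces exactly the conclusion of the theorem.

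First, I would fix an arbitrary vertex $a$ at which no external current is injected, and let $N(a)=\{b_1,\ldots,b_k\}$ denote its neighbors. From the notational conventions established earlier in this section, $I_{ab_i}$ is the current flowing along edge $ab_i$ from $a$ to $b_i$, and hence $-I_{ab_i}=I_{b_ia}$ is the current entering $a$ through that edge. The net rate at which charge flows into the vertex $a$ across all incident edges is therefore $-\sum_{i=1}^{k}I_{ab_i}$. In a steady state the total charge residing at $a$ must be constant in time, so this net inflow vanishes, which forces
\[
\sum_{i=1}^{k}I_{ab_i}=0,
\]
precisely the statement of the theorem. Vertices attached to the voltage source can be handled by treating the source as supplying a compensating external current.

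The main obstacle is conceptual rather than computational: in a purely mathematical treatment this law is very often absorbed into the definition of an admissible current distribution, so one may legitimately ask whether the argument above is a genuine proof or only an unpacking of a primitive definition. A fully self-contained derivation would instead posit a potential function $v\colon V(G)\to\mathbb{R}$ that minimizes the dissipated power $\tfrac{1}{2}\sum_{ab\in E(G)}(v_a-v_b)^2/r_{ab}$ subject to prescribed boundary potentials, define currents via Ohm's law $I_{ab}=(v_a-v_b)/r_{ab}$, and then recover Kirchhoff's current law as the Euler--Lagrange stationarity condition $\sum_{b\sim a}(v_a-v_b)/r_{ab}=0$ at each interior vertex $a$. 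Either route is acceptable in this expository setting; the choice simply reflects how much of the standard physical framework one is willing to accept as primitive.
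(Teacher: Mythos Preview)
Your proposal is not wrong, but it addresses a question the paper does not attempt to answer. In the paper, Kirchhoff's current law is stated as one of the ``basic laws in the theory of electrical network'' and is given \emph{no proof whatsoever}: it functions as an axiom of the electrical-network model, on the same footing as Ohm's law and Kirchhoff's voltage law, and the paper simply cites it and moves on. So there is nothing to compare your argument against.

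Your own discussion is reasonable physics, and you correctly identify the key conceptual issue: in a mathematical treatment the current law is usually taken as part of the \emph{definition} of an admissible current assignment (or, equivalently, recovered as the stationarity condition for the energy functional), rather than something to be derived. Given that the paper adopts exactly this axiomatic stance, the most faithful ``proof'' here is no proof at all---just the acknowledgement that the statement is being imported from physics as a primitive. If you wish to include your charge-conservation or variational remarks, they should be framed as motivation for accepting the law, not as a proof of a theorem in the paper's logical development.
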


\begin{theorem}[Kirchhoff's voltage law]\label{voltage}
The directed sum of the potential differences (voltages) around any closed circle is zero.
\end{theorem}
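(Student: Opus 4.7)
\textbf{Proof proposal for Theorem \ref{voltage}.}

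My plan is to derive Kirchhoff's voltage law directly from the definition of voltage given in item (ii) of the notation list, namely $U_{ab}=v_a-v_b$. The key point is that the electric potential $v$ is a single-valued function on the vertices of the network, and the voltages are just its coboundary, so summing along any closed walk must telescope to zero. No appeal to Ohm's law or the current law will be needed.

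Concretely, let a closed circuit be described by a cyclic sequence of vertices $a_1,a_2,\ldots,a_k,a_{k+1}=a_1$, where consecutive vertices $a_i,a_{i+1}$ are joined by an edge of the network. The directed sum of potential differences along this circuit is
\[
\sum_{i=1}^{k} U_{a_i a_{i+1}} \;=\; \sum_{i=1}^{k} \bigl(v_{a_i}-v_{a_{i+1}}\bigr),
\]
and the right-hand side telescopes to $v_{a_1}-v_{a_{k+1}}=v_{a_1}-v_{a_1}=0$. This is precisely the assertion of the theorem.

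There is essentially no obstacle of substance here; the only care needed is with the sign convention, since each edge $a_ia_{i+1}$ must be traversed in the direction matching the orientation implicit in $U_{a_ia_{i+1}}$. This is handled automatically by the identity $U_{ab}=-U_{ba}$ recorded just after the list of notations, so reversing the orientation of the traversal of the circuit negates every summand and the total sum still vanishes. Thus the directed sum is well-defined and equals zero, proving the statement.
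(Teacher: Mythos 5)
Your argument is correct, but it is worth noting that the paper does not prove this statement at all: Kirchhoff's voltage law is presented there (alongside Ohm's law and the current law) as a basic physical law of electrical networks, taken as an axiom of the model. Your telescoping derivation from $U_{ab}=v_a-v_b$ is valid within the paper's framework, since item (i) of the notation list already posits a single-valued potential $v_a$ at each vertex; once that is granted, the directed sum around any closed walk collapses to $v_{a_1}-v_{a_1}=0$ exactly as you say, and the sign convention $U_{ab}=-U_{ba}$ takes care of orientation. What your approach buys is a demonstration that, in this formalism, the voltage law is not an independent assumption but a consequence of the definition of voltage as a coboundary of the potential. The one caveat is that the genuine physical content of Kirchhoff's voltage law is precisely the existence of such a single-valued potential (equivalently, that the field is conservative); your proof does not establish that, it presupposes it. Since the paper makes the same presupposition, your derivation is not circular relative to the paper, but you should be explicit that you are proving the law \emph{given} the existence of the potential function, rather than from first principles.
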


Ohm's law easily yields formulae for the equivalent resistance of resistors in series or in parallel.

\begin{figure}[htbp]
\centering
\includegraphics[width=8cm]{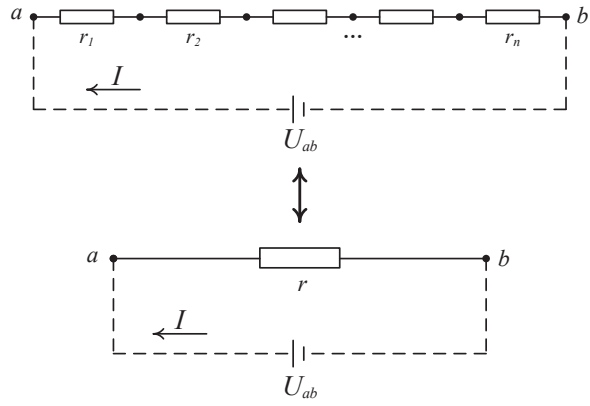}
\renewcommand{\figurename}{Fig.}
\caption{{\footnotesize Resistors in series.
}}\label{series}
\end{figure}

Figure \ref{series} shows $n$ resistors in series.
If we view the resistors in series as one single resistor with resistance $r$,
then $U_{ab}=Ir=I(r_1+r_2+\cdots+r_n)$,
implying that $r=r_1+r_2+\cdots+r_n$, which is called the series law.

\begin{figure}[htbp]
\centering
\includegraphics[width=12cm]{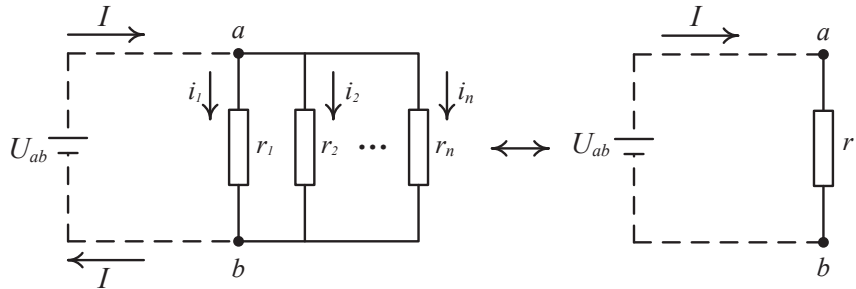}
\renewcommand{\figurename}{Fig.}
\caption{{\footnotesize Resistors in parallel.
}}\label{parallel}
\end{figure}

Figure \ref{parallel} shows $n$ resistors in parallel.
If we view the resistors in parallel as one single resistor with resistance $r$,
then $U_{ab}=Ir=i_1r_1=i_2r_2=\cdots=i_nr_n$,
implying that $I=\frac{U_{ab}}{r}$ and $i_i=\frac{U_{ab}}{r_i}$ for all $i=1,2,\cdots,n$.
Since $I=i_1+i_2+\cdots+i_n$, we obtain
$\frac{U_{ab}}{r}=\frac{U_{ab}}{r_1}+\frac{U_{ab}}{r_2}+\cdots+\frac{U_{ab}}{r_n}$, that is,
$\frac{1}{r}=\frac{1}{r_1}+\frac{1}{r_2}+\cdots+\frac{1}{r_n}$, which is called the parallel law.

The basic idea of effective resistance is established
by letting a voltage source connects vertices
$u$ and $v$.
Let $I$ be the electric current from the voltage source to $u$
(Kirchhoff's current law guarantees that it equals the
electric current from $v$ to the voltage source).
The voltage source effectively sees the network between
$u$ and $v$ as a single resistor connecting $u$ and $v$,
through which the electric current flows. The effective resistance
is basically the resistance of that single resistor.

\begin{definition}
Let $G$ be a graph with assigned resistances for edges. Connect a voltage source (a battery) between vertices $u$ and $v$.
Let $I$ be the electric current from the voltage source to $u$.
The effective resistance between vertices $u$ and $v$,
$R_{uv}(G)$ (or $R_{uv}$ if there is no confusion), is defined by the following:
$$R_{uv}(G)=\frac{U_{uv}}{I}.$$
\end{definition}

\begin{remark}
It is easy to see that $R_{uv}$ is independent of the value of the voltage source we added.
\end{remark}

In graph theory, sometimes we call $R_{uv}$
the resistance distance \cite{KR} between $u$ and $v$.
This is because the effective resistance function $R$ is proved
to be a metric on graphs \cite{Sharpe}:

$R: V(G)\times V(G) \rightarrow [0, +\infty)$
is a function that satisfies
\begin{enumerate}
\renewcommand{\theenumi}{\rm (\roman{enumi})}

\item non-negativity: $R_{xy}\geq 0$ for all $x, y\in V(G)$,

\item identity of indiscernibles: $R_{xy}=0$ if and only if $x=y$,

\item symmetry: $R_{xy}=R_{yx}$ for all $x, y\in V(G)$,

\item triangle inequality: $R_{xy}\leq R_{xz}+R_{yz}$ for all $x, y, z\in V(G).$
\end{enumerate}

In \cite{KR}, after named $R_{uv}$ the resistance distance, Klein and Randi\'{c}
defined the Kirchhoff index $Kf(G)$ of $G$ as the sum of resistance distances
between each pair of vertices of $G$, that is,
$$Kf(G)=\sum_{\{u, v\}: u, v\in V(G)} R_{uv}.$$

See \cite{Gervacio, SC, QZD, Yang1, YY, Yang2} for some recent researches on
effective resistance and Kirchhoff index.

Now we introduce a nice relation between electrical network and
spanning trees of graphs.

\begin{theorem}[\cite{SR}]\label{res-spanning}
Let $G=(V, E, \{w_e\})$ be a simple weighted graph with weight
$w_{ij}=\frac{1}{r_{ij}}$ if $ij\in E$, and $w_{ij}=0$
if $ij\notin E$, where $r_{ij}$ is the resistance of edge $ij\in E$.
For any two vertices $u, v\in V$,
the effective resistance between $u$ and $v$ is
$$R_{uv}=\frac{\tau(G/\{u,v\})}{\tau(G)}.$$
\end{theorem}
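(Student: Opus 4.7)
The plan is to derive Theorem~\ref{res-spanning} from the weighted Matrix--Tree Theorem combined with Cramer's rule. Write $L=L(G)$ for the weighted Laplacian of $G$, defined by $L_{ab}=-w_{ab}$ for $a\neq b$ and $L_{aa}=\sum_{b\neq a} w_{ab}$. The Matrix--Tree Theorem, which I would invoke as a black box, says that for any vertex $k$ the principal minor $\det \hat L^{(k)}$ obtained from $L$ by deleting row and column $k$ equals $\tau(G)$.

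First I would translate Ohm's law (Theorem~\ref{Ohm}) and Kirchhoff's current law (Theorem~\ref{current}) into linear algebra. Inject current $I$ at $u$ and extract $I$ at $v$, and let $\mathbf v=(v_a)_{a\in V}$ be the vector of potentials. At any vertex $a\notin\{u,v\}$, conservation of current combined with Ohm's law yields $\sum_{b\sim a}(v_a-v_b)w_{ab}=0$, that is $(L\mathbf v)_a=0$; the same computation at $u$ and $v$ gives $(L\mathbf v)_u=I$ and $(L\mathbf v)_v=-I$. Hence $L\mathbf v=I(\mathbf e_u-\mathbf e_v)$. Since $L\mathbf 1=\mathbf 0$ on the connected graph $G$, the potentials are determined up to an additive constant; I would fix this by grounding at $v$, i.e.\ setting $v_v=0$, so that deleting the $v$-th coordinate reduces the system to $\hat L^{(v)}\tilde{\mathbf v}=I\tilde{\mathbf e}_u$.

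Cramer's rule applied to this reduced system expresses $v_u$ as $I$ times the $(u,u)$-cofactor of $\hat L^{(v)}$, divided by $\det \hat L^{(v)}$. The denominator is $\tau(G)$ by Matrix--Tree, while the numerator equals $I\cdot \det L_{\{u,v\}}$, where $L_{\{u,v\}}$ denotes the submatrix of $L$ obtained by deleting both rows $u,v$ and both columns $u,v$. Consequently $R_{uv}=v_u/I=\det L_{\{u,v\}}/\tau(G)$, and it remains to verify that $\det L_{\{u,v\}}=\tau(G/\{u,v\})$.

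This last identification is the step I expect to require the most care, and it follows from a second application of the Matrix--Tree Theorem, this time to the contracted graph $G/\{u,v\}$. Let $w$ be the vertex of $G/\{u,v\}$ obtained by merging $u$ and $v$. For $a,b\in V\setminus\{u,v\}$ with $a\neq b$ the weight $w_{ab}$ is unchanged by the contraction, and the weighted degree of each such $a$ is unchanged as well (the edges $au$ and $av$ are simply rerouted to $w$, while any $uv$-edge becomes a loop which by the convention of the paper is discarded and contributes $0$ to the Laplacian). Therefore the principal submatrix of $L(G/\{u,v\})$ obtained by deleting row and column $w$ coincides exactly with $L_{\{u,v\}}$, so Matrix--Tree applied to $G/\{u,v\}$ gives $\det L_{\{u,v\}}=\tau(G/\{u,v\})$, completing the proof.
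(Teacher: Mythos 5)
The paper does not prove this statement at all: it is quoted as a known result with a citation to Seshu and Reed, and is used purely as a tool in later sections. So there is no in-paper argument to compare yours against; your proposal has to stand on its own, and it does. The chain Ohm $+$ Kirchhoff $\Rightarrow L\mathbf v = I(\mathbf e_u-\mathbf e_v)$, grounding at $v$ to get the nonsingular reduced system $\hat L^{(v)}\tilde{\mathbf v}=I\tilde{\mathbf e}_u$, Cramer's rule turning $v_u/I$ into a ratio of minors, and the two applications of the weighted Matrix--Tree Theorem identifying denominator and numerator with $\tau(G)$ and $\tau(G/\{u,v\})$ respectively, is the classical derivation and each step checks out. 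In particular you correctly isolate the one point that needs care: $G/\{u,v\}$ is generally a multigraph (common neighbours of $u$ and $v$ produce parallel edges at the merged vertex $w$), but since row and column $w$ are the ones deleted, the only effect on the remaining principal submatrix is through the weighted degrees of the other vertices, which you verify are unchanged; and any $uv$-edge becomes a loop that is discarded, consistent with the paper's conventions and with the fact that loops lie in no spanning tree. Two points you treat implicitly but could state for completeness: the Matrix--Tree Theorem you invoke must be the version for weighted multigraphs (parallel edge weights adding), and the system $L\mathbf v=I(\mathbf e_u-\mathbf e_v)$ is solvable because $\mathbf e_u-\mathbf e_v$ is orthogonal to the all-ones kernel of $L$ on a connected graph. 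Neither is a gap.
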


\section{Spanning trees in $K_{m, n}$ containing a certain matching}

In this section, we aim to prove Theorem \ref{matching}.

For any graph $G$ and any $F\subseteq E(G)$,
it is easy to see that $\tau_F(G)=\tau(G/F)$.
So for any two matchings $M_1$ and $M_2$ in $K_{m, n}$ with
the same size, $\tau_{M_1}(K_{m, n})=\tau_{M_2}(K_{m, n})$
because $K_{m, n}/M_1$ and $K_{m, n}/M_2$ are isomorphic.
So $\tau_M(K_{m, n})$ only depend on the size of $M$.
Now let $\tau_k(K_{m, n})$ be the number of spanning trees of
$K_{m, n}$ containing any given matching of size $k$.
Then $\tau_0(K_{m, n})=\tau(K_{m, n})=m^{n-1}n^{m-1}$.

\begin{lemma}\label{t1}
$\tau_1(K_{m, n})=\frac{m+n-1}{mn}\cdot\tau(K_{m, n}).$
\end{lemma}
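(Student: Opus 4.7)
The plan is to invoke Theorem \ref{res-spanning} to convert this spanning-tree count into an effective resistance computation. Fix an edge $e=uv$ of $K_{m,n}$ with $u\in X$, $v\in Y$. Since $\tau_1(K_{m,n})=\tau(K_{m,n}/e)$, and the multigraph $K_{m,n}/e$ (contract $e$, delete the resulting loop) coincides with $K_{m,n}/\{u,v\}$, Theorem \ref{res-spanning} applied to the unit-weight graph $K_{m,n}$ gives
$$
\frac{\tau_1(K_{m,n})}{\tau(K_{m,n})}=R_{uv}(K_{m,n}).
$$
So the whole lemma reduces to showing $R_{uv}(K_{m,n})=\frac{m+n-1}{mn}$.

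To compute this effective resistance, I would exploit the strong symmetry of $K_{m,n}$ together with the uniqueness of the potential function (up to an additive constant) solving Kirchhoff's current law. Inject unit current at $u$ and extract it at $v$. By the symmetry fixing $u$ and $v$, every vertex of $X\setminus\{u\}$ can be assigned the same potential $p$ and every vertex of $Y\setminus\{v\}$ the same potential $q$; normalize $v_u=1$ and $v_v=0$. Applying Kirchhoff's current law (Theorem \ref{current}) together with Ohm's law (Theorem \ref{Ohm}) at any vertex of $X\setminus\{u\}$ yields one linear equation in $p,q$, and at any vertex of $Y\setminus\{v\}$ yields another. Solving this $2\times 2$ system is straightforward and gives
$$
p=\frac{n-1}{m+n-1},\qquad q=\frac{n}{m+n-1}.
$$

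Finally I would compute the total current $I$ leaving $u$: it flows along the edge $uv$ (contributing $1$) and along the $n-1$ edges from $u$ to the other vertices of $Y$ (each contributing $1-q$). A short calculation gives $I=\frac{mn}{m+n-1}$, hence $R_{uv}=\frac{U_{uv}}{I}=\frac{m+n-1}{mn}$, which combined with the first display yields the lemma.

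The only genuine obstacle is justifying the symmetry reduction, i.e.\ that all vertices of $X\setminus\{u\}$ share a common potential and likewise for $Y\setminus\{v\}$; this follows because any permutation of $X\setminus\{u\}$ or of $Y\setminus\{v\}$ is an automorphism of the network preserving the source and sink, so it must preserve the (unique up to additive constant) harmonic potential. The rest of the proof is routine linear algebra.
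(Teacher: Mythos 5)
Your proof is correct, but it takes a different route from the paper. The paper proves this lemma by a purely combinatorial double count: it forms the bipartite incidence structure between the $mn$ edges of $K_{m,n}$ and its spanning trees, notes that edge-transitivity of $K_{m,n}$ makes every edge lie in the same number $\tau_1(K_{m,n})$ of spanning trees while every spanning tree contains exactly $m+n-1$ edges, and concludes $mn\cdot\tau_1(K_{m,n})=(m+n-1)\cdot\tau(K_{m,n})$ in three lines with no electrical input at all. You instead run the effective-resistance machinery: reduce to $R_{uv}(K_{m,n})$ via Theorem \ref{res-spanning}, then solve for the harmonic potential using the symmetry of $K_{m,n}$ fixing $u$ and $v$. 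I checked your numbers: the vertex equations $np-(n-1)q=0$ and $mq-(m-1)p=1$ do give $p=\frac{n-1}{m+n-1}$, $q=\frac{n}{m+n-1}$, and $I=1+(n-1)(1-q)=\frac{mn}{m+n-1}$, so $R_{uv}=\frac{m+n-1}{mn}$ as claimed; your appeal to automorphisms preserving the unique harmonic potential is a legitimate justification of the symmetry reduction (alternatively you could just verify Kirchhoff's two laws directly for your assignment, as the paper does in its proof of Theorem \ref{mathcing-ratio}). In effect your argument is the $k=0$ instance of the paper's proof of Theorem \ref{mathcing-ratio}, so it is methodologically uniform with the rest of Section 3, whereas the paper's counting argument is more elementary and shorter but exploits a symmetry (edge-transitivity) that is special to this base case and does not generalize to the ratios $\tau_{k+1}/\tau_k$.
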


\begin{proof}
For any $e\in E(K_{m,n})$, we prove there are
$\frac{m+n-1}{mn}\tau(K_{m, n})$ spanning trees
contain $e$. Note that $K_{m,n}$ has $mn$ edges,
and $\tau(K_{m, n})$ spanning trees.
Construct a bipartite graph $H$ with bipartition
$(X, Y)$, where $X$ is the set of edges in
$K_{m,n}$ and $Y$ is the set of
spanning trees in $K_{m,n}$,
and with edge set $\{\{e,T\}: e\in X, T\in Y, e\in E(T)\}$.
By symmetry, $H$ is a biregular graph. Each vertex
in $X$ has degree $\tau_1(K_{m, n})$ and each vertex
in $Y$ has degree $m+n-1$. Therefore, we have
$$|X|\cdot \tau_1(K_{m, n})=|Y|\cdot(m+n-1).$$
By definition, $|X|=mn$ and $|Y|=\tau(K_{m,n})$.
Thus,
$$mn\cdot \tau_1(K_{m, n})=(m+n-1)\cdot \tau(K_{m, n}),$$
that is,
$$\tau_1(K_{m, n})=\frac{m+n-1}{mn}\cdot \tau(K_{m, n}).$$
\end{proof}

\begin{theorem}\label{mathcing-ratio}
For any $1\leq k\leq \min\{m, n\}-1$,
$$\frac{\tau_{k+1}(K_{m, n})}{\tau_k(K_{m, n})}=\frac{(m+n)(m+n-k-1)}{mn(m+n-k)}.$$
\end{theorem}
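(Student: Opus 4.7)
The plan is to identify the ratio $\tau_{k+1}(K_{m,n})/\tau_k(K_{m,n})$ with an effective resistance in $K_{m,n}/M$ and then compute that resistance by exploiting the rich symmetry of this quotient graph. Fix a matching $M$ of size $k$ in $K_{m,n}$, and pick an edge $e=uv$ of $K_{m,n}$ with $u\in X\setminus V(M)$ and $v\in Y\setminus V(M)$, so that $M\cup\{e\}$ is a matching of size $k+1$. Write $G=K_{m,n}/M$; then $u$ and $v$ remain distinct vertices of $G$ joined by the single edge $e$, so $G/\{u,v\}=K_{m,n}/(M\cup\{e\})$. Combining the identity $\tau_F(H)=\tau(H/F)$ with Theorem \ref{res-spanning} gives
$$
R_{uv}(G)=\frac{\tau(G/\{u,v\})}{\tau(G)}=\frac{\tau_{k+1}(K_{m,n})}{\tau_k(K_{m,n})},
$$
so it suffices to prove $R_{uv}(G)=\frac{(m+n)(m+n-k-1)}{mn(m+n-k)}$.

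Next I would reduce $G$ by symmetry. The $k$ super-vertices of $G$ (one from each edge of $M$) are pairwise interchangeable by automorphisms of $G$ fixing both $u$ and $v$; likewise the $m-k-1$ pure-$X$ vertices other than $u$, and the $n-k-1$ pure-$Y$ vertices other than $v$. By the standard principle that vertices in a common orbit of the symmetries of $(G,u,v)$ carry equal potential under the $u$-to-$v$ current flow and so may be glued without altering the effective resistance, $G$ collapses to a five-vertex multigraph on $\{u,v,Z,X^*,Y^*\}$, where $Z$, $X^*$, $Y^*$ are the merged classes. The parallel law then yields explicit conductances: $k$ on each of $uZ$ and $vZ$; $1$ on $uv$; $n-k-1$ on $uY^*$; $m-k-1$ on $vX^*$; $(m-k-1)(n-k-1)$ on $X^*Y^*$; $k(m-k-1)$ on $ZX^*$; and $k(n-k-1)$ on $ZY^*$.

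Setting $v_u=1$, $v_v=0$, Kirchhoff's current law at each of $Z$, $X^*$, $Y^*$ gives a $3\times 3$ linear system in the unknowns $V_Z,V_{X^*},V_{Y^*}$. Eliminating $V_{Y^*}$ between the equations at $Z$ and $X^*$ collapses to the clean identity $(m+n-k)V_Z=1+(m+n-k-1)V_{X^*}$; feeding this back into the equation at $Y^*$ and simplifying yields $1-V_{X^*}=\frac{m(m+n-k)}{(m+n)(m+n-k-1)}$. The total current leaving $u$ is $k(1-V_Z)+(n-k-1)(1-V_{Y^*})+1$, which (using the Kirchhoff relation at $X^*$ to kill the $V_Z$ and $V_{Y^*}$ terms) collapses to $n(1-V_{X^*})$; hence $R_{uv}=U_{uv}/I=1/I$ gives exactly the required ratio.

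The main obstacle is computational rather than conceptual. When $m=n$ the involution swapping the two parts of the bipartition forces $V_Z=\frac{1}{2}$ and $V_{X^*}+V_{Y^*}=1$, collapsing the linear system essentially to a single scalar equation; but for general $m\neq n$ no such involution is available, so one must push the asymmetric $3\times 3$ elimination through symbolically and verify that the factors $(m+n-k-1)$, $(m+n-k)$, and $(m+n)$ do indeed cancel to produce the compact closed-form ratio on the right-hand side.
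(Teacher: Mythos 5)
Your proposal is correct. At the top level it follows the same strategy as the paper: contract the matching, recognize $\tau_{k+1}(K_{m,n})/\tau_k(K_{m,n})$ as the effective resistance $R_{uv}(K_{m,n}/M)$ between two unmatched vertices via Theorem \ref{res-spanning}, and then compute that resistance. Where you genuinely differ is in the computation itself. The paper guesses an explicit current on every edge of $K_{m,n}/M$ and then verifies Kirchhoff's current law at each vertex and the voltage law on a fundamental cycle basis --- long but purely mechanical. You instead invoke the equipotential principle for orbits of automorphisms fixing $u$ and $v$, collapse $K_{m,n}/M$ to a five-vertex multigraph, and solve the resulting $3\times 3$ node-potential system; I checked your conductances and the elimination, and the identities $(m+n-k)V_Z=1+(m+n-k-1)V_{X^*}$, $1-V_{X^*}=\frac{m(m+n-k)}{(m+n)(m+n-k-1)}$ and $I=n(1-V_{X^*})$ are all correct, giving $R_{uv}=\frac{(m+n)(m+n-k-1)}{mn(m+n-k)}$ as required. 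Your route is derivational rather than verificational (no need to divine the current pattern in advance), at the cost of having to justify the gluing step (uniqueness of the harmonic potential on a connected network plus its invariance under automorphisms fixing the terminals) and of pushing through a slightly messier asymmetric elimination when $m\neq n$. One small point to tidy up in a final write-up: when $k=\min\{m,n\}-1$ one of the classes $X^*$, $Y^*$ is empty, so the corresponding node equation should simply be dropped; the final formula is unaffected, but your derivation as written divides by $m-k-1$ and $n-k-1$ (the paper's proof glosses over the same degenerate case).
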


\begin{proof}
For any matching $M$ with size $k$,
we consider the graph $K_{m, n}/M$.
Assume that $(X,Y)$ is the bipartition of $K_{m,n}$,
where $X=\{s, x_1, x_2, \ldots, x_{m-1}\}$
and $Y=\{t, y_1, y_2, \ldots, y_{n-1}\}$,
and that $M=\{x_{m-i}y_{n-i}:i=1,2,\ldots,k\}$.
Then the vertex set of $K_{m,n}/M$
is
$\{s, x_1, x_2, \ldots, x_{m-k-1}\}
\cup \{t, y_1, y_2, \ldots, y_{n-k-1}\}
\cup \{z_1, z_2, \ldots, z_k\}$,
where each $z_i$ is the vertex
obtained after contracting edge $x_{m-i}y_{n-i}$ for $i=1,2,\ldots,k\}$.
An example of $K_{m, n}/M$ is shown in
Figure \ref{matching-contract}.

\begin{figure}[htbp]
\centering
\includegraphics[width=10cm]{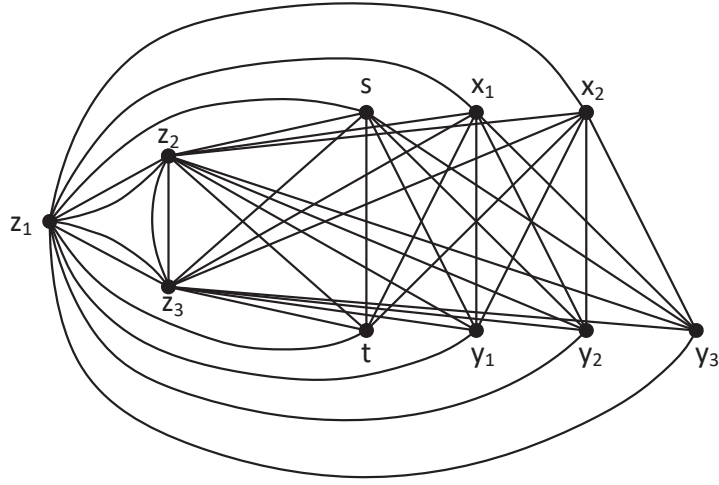}
\renewcommand{\figurename}{Fig.}
\caption{{\footnotesize $K_{6, 7}/M$,
where $M$ is a matching of size $3$.
}}\label{matching-contract}
\end{figure}

We view $K_{m, n}/M$ as an electrical network. Assume a unit
current enters at $s$ and leaves at $t$
and each edge has unit
resistance.

Applying the parallel law, we replace the two parallel edges between $z_i$ and $z_j$
($i\neq j$) by one single edge $z_iz_j$ with resistance $\frac{1}{2}$,
and denote the resulting graph by $G'$. Then $R_{st}(K_{m, n}/M)=R_{st}(G')$.
In what follows, we consider the electrical network $G'$ with a unit
current enters at $s$ and leaves at $t$.

Write $N=m+n-k$.
We assign a current to each edge as follows:
\begin{displaymath}
\left\{ \begin{array}{ll}
I_{st}=w_0=\frac{1}{m}+\frac{1}{n}-\frac{1}{mN}-\frac{1}{nN}, &  \\
I_{sy_i}=w_1=\frac{1}{n}-\frac{1}{mN}-\frac{1}{nN}, & \textrm{$1\leq i\leq n-k-1$,} \\
I_{sz_i}=w_2=\frac{1}{n}-\frac{1}{nN}, & \textrm{$1\leq i\leq k$,} \\
I_{tx_i}=w_3=\frac{1}{mN}+\frac{1}{nN}-\frac{1}{m}, & \textrm{$1\leq i\leq m-k-1$,} \\
I_{tz_i}=w_4=\frac{1}{mN}-\frac{1}{m}, & \textrm{$1\leq i\leq k$,} \\
I_{x_iy_j}=w_5=-\frac{1}{mN}-\frac{1}{nN}, & \textrm{$1\leq i\leq m-k-1$, $1\leq j\leq n-k-1$,} \\
I_{z_ix_j}=w_6=\frac{1}{nN}, & \textrm{$1\leq i\leq k$, $1\leq j\leq m-k-1$,} \\
I_{z_iy_j}=w_7=-\frac{1}{mN}, & \textrm{$1\leq i\leq k$, $1\leq j\leq n-k-1$,} \\
I_{z_iz_j}=0, & \textrm{$1\leq i\neq j\leq k$.}
\end{array} \right.
\end{displaymath}

First we check Kirchhoff's current law for each vertex:
\begin{equation*}\begin{split}
& \mbox{at vertex }s:
-1+\sum_{u: u\sim s} I_{su}=-1+w_0+(n-k-1)w_1+kw_2=0; \\
&\mbox{at vertex } t: 1+\sum_{u: u\sim t} I_{tu}=1-w_0+(m-k-1)w_3+kw_4=0; \\
&\mbox{at vertex } x_i ~(1\leq i\leq m-k-1): \sum_{u: u\sim x_{i}} I_{x_{i}u}=(n-k-1)w_5-w_3-kw_6=0; \\
&\mbox{at vertex } y_i ~(1\leq i\leq n-k-1): \sum_{u: u\sim y_{i}} I_{y_{i}u}=w_1+(m-k-1)w_5+kw_7=0; \\
&\mbox{at vertex } z_i ~(1\leq i\leq k): \sum_{u: u\sim z_{i}} I_{z_{i}u}=(m-k-1)w_6+(n-k-1)w_7-w_2-w_4=0.
\end{split}\end{equation*}

Then we check Kirchhoff's voltage law for cycles.

A cycle basis of a graph is a minimal set of cycles
such that each Eulerian subgraph can be expressed
as a symmetric difference of cycles in this set.

It is easy to see that if cycle $C$ is the
symmetric difference of $k$ cycles, and the
Kirchhoff's voltage law holds for all these
$k$ cycles, then the Kirchhoff's voltage law
also holds for $C$. That is to say, we only
need to check those cycles in a cycle basis.

Let $T$ be a spanning tree of $K_{mn}/M$
that $T$ is the star with center vertex $z_1$.
For any $e\in E(K_{mn}/M)-E(T)$, we have a unique
cycle $C_{e}$ consisting of $e$ together with
the unique path in $T$ connecting the endpoints of $e$.
All these cycles are called fundamental cycles
and they form a special basis for the cycle space
called fundamental cycle basis. We check Kirchhoff's
voltage law for all these fundamental cycles.

$z_1\rightarrow s\rightarrow t\rightarrow z_1$
(the fundamental cycle corresponding to the edge $st$):
\begin{eqnarray*}
\ U_{z_1s}+U_{st}+U_{tz_1} & = & I_{z_1s}r_{z_1s}+I_{st}r_{st}+I_{tz_1}r_{tz_1} \\
& = & -w_2\cdot 1+w_0\cdot 1+w_4\cdot 1 \\
& = & -\frac{1}{n}+\frac{1}{nN}+\frac{1}{m}+\frac{1}{n}-\frac{1}{mN}-\frac{1}{nN}+\frac{1}{mN}-\frac{1}{m}\\
& = & 0.
\end{eqnarray*}

$z_1\rightarrow s\rightarrow y_i\rightarrow z_1$
(the fundamental cycle corresponding to edge $sy_i$):
\begin{eqnarray*}
\ U_{z_1s}+U_{sy_i}+U_{y_iz_1}
& = & I_{z_1s}r_{z_1s}+I_{sy_i}r_{sy_i}+I_{y_iz_1}r_{y_iz_1} \\
& = & -w_2\cdot 1+w_1\cdot 1-w_7\cdot 1 \\
& = & -\frac{1}{n}+\frac{1}{nN}+\frac{1}{n}-\frac{1}{mN}-\frac{1}{nN}+\frac{1}{mN} \\
& = & 0.
\end{eqnarray*}

$z_1\rightarrow s\rightarrow z_i\rightarrow z_1$
(the fundamental cycle corresponding to the edge $sz_i$):
\begin{eqnarray*}
\ U_{z_1s}+U_{sz_i}+U_{z_iz_1}
& = & I_{z_1s}r_{z_1s}+I_{sz_i}r_{sz_i}+I_{z_iz_1}r_{z_iz_1} \\
& = & -w_2\cdot 1+w_2\cdot 1-0\cdot 1 \\
& = & 0.
\end{eqnarray*}

$z_1\rightarrow t\rightarrow x_i\rightarrow z_1$
(the fundamental cycle corresponding to the edge $tx_i$):
\begin{eqnarray*}
\ U_{z_1t}+U_{tx_i}+U_{x_iz_1}
& = & I_{z_1t}r_{z_1t}+I_{tx_i}r_{tx_i}+I_{x_iz_1}r_{x_iz_1} \\
& = & -w_4\cdot 1+w_3\cdot 1-w_6\cdot 1 \\
& = & -\frac{1}{mN}+\frac{1}{m}+\frac{1}{mN}+\frac{1}{nN}-\frac{1}{m}-\frac{1}{nN} \\
& = & 0.
\end{eqnarray*}

$z_1\rightarrow t\rightarrow z_i\rightarrow z_1$
(the fundamental cycle corresponding to the edge $tz_i$):
\begin{eqnarray*}
\ U_{z_1t}+U_{tz_i}+U_{z_iz_1}
& = & I_{z_1t}r_{z_1t}+I_{tz_i}r_{tz_i}+I_{z_iz_1}r_{z_iz_1} \\
& = & -w_4\cdot 1+w_4\cdot 1-0\cdot 1 \\
& = & 0.
\end{eqnarray*}

$z_1\rightarrow x_i\rightarrow y_j\rightarrow z_1$
(the fundamental cycle corresponding to the edge $x_iy_j$):
\begin{eqnarray*}
\ U_{z_1x_i}+U_{x_iy_j}+U_{y_jz_1}
& = & I_{z_1x_i}r_{z_1x_i}+I_{x_iy_j}r_{x_iy_j}+I_{y_jz_1}r_{y_jz_1} \\
& = & w_6\cdot 1+w_5\cdot 1-w_7\cdot 1 \\
& = & \frac{1}{nN}-\frac{1}{mN}-\frac{1}{nN}+\frac{1}{mN} \\
& = & 0.
\end{eqnarray*}

$z_1\rightarrow x_i\rightarrow z_j\rightarrow z_1$
(the fundamental cycle corresponding to the edge $x_iz_j$):
\begin{eqnarray*}
\ U_{z_1x_i}+U_{x_iz_j}+U_{z_jz_1}
& = & I_{z_1x_i}r_{z_1x_i}+I_{x_iz_j}r_{x_iz_j}+I_{z_jz_1}r_{z_jz_1} \\
& = & w_6\cdot 1-w_6\cdot 1-0\cdot 1 \\
& = & 0.
\end{eqnarray*}

$z_1\rightarrow y_i\rightarrow z_j\rightarrow z_1$
(the fundamental cycle corresponding to the edge $y_iy_j$):
\begin{eqnarray*}
\ U_{z_1y_i}+U_{y_iz_j}+U_{z_jz_1}
& = & I_{z_1y_i}r_{z_1y_i}+I_{y_iz_j}r_{y_iz_j}+I_{z_jz_1}r_{z_jz_1} \\
& = & w_7\cdot 1-w_7\cdot 1-0\cdot 1 \\
& = & 0.
\end{eqnarray*}

$z_1\rightarrow z_i\rightarrow z_j\rightarrow z_1, i\neq j$
(the fundamental cycle corresponding to the edge $z_iz_j$):
\begin{eqnarray*}
\ U_{z_1z_i}+U_{z_iz_j}+U_{z_jz_1} & = & I_{z_1z_i}r_{z_1z_i}+I_{z_iz_j}r_{z_iz_j}+I_{z_jz_1}r_{z_jz_1} \\
& = & 0\cdot \frac{1}{2}+0\cdot \frac{1}{2}+0\cdot \frac{1}{2} \\
& = & 0.
\end{eqnarray*}

Therefore, the resistances and currents we assigned satisfy the Kirchhoff's laws.
By the definition of effective resistance, we obtain
$$R_{st}(K_{m, n}/M)=R_{st}(G')=\frac{U_{st}}{1}=I_{st}r_{st}=w_0
=\frac{(m+n)(m+n-k-1)}{mn(m+n-k)}.$$
Then, following from Theorem \ref{res-spanning}, we have
\begin{eqnarray*}
\ \frac{\tau_{k+1}(K_{m, n})}{\tau_{k}(K_{m, n})} & = &
\frac{\tau\left((K_{m, n}/M)/st\right)}{\tau(K_{m, n}/M)} \\
& = & \frac{\tau\left((K_{m, n}/M)/\{s, t\}\right)}{\tau(K_{m, n}/M)} \\
& = & R_{st}(K_{m, n}/M) \\
& = & \frac{(m+n)(m+n-k-1)}{mn(m+n-k)}.
\end{eqnarray*}
\end{proof}

\noindent
{\bf Proof of Theorem \ref{matching}.}
Since $\tau (K_{m, n})=m^{n-1}n^{m-1}$, Theorem \ref{matching} holds when $k=0$.

Lemma \ref{t1} shows that Theorem \ref{matching} holds when $k=1$.

When $2\leq k\leq \min\{m, n\}$, by using Theorem \ref{mathcing-ratio}, we obtain
\begin{eqnarray*}
\ \tau_M(K_{m, n}) & = & \tau_k(K_{m, n}) \\
& = & \tau_1(K_{m, n})\cdot \prod_{i=1}^{k-1} \frac{\tau_{i+1}(K_{m, n})}{\tau_i(K_{m, n})} \\
& = & (m+n-1)m^{n-2}n^{m-2}\cdot \prod_{i=1}^{k-1} \frac{(m+n)(m+n-i-1)}{mn(m+n-i)}\\
& = & (m+n)^{k-1}(m+n-k)m^{n-k-1}n^{m-k-1}.
\end{eqnarray*}
The proof is complete. {\hfill$\Box$}

\section{Spanning trees in $K_{m, n}$ containing a certain tree}

In this section, we aim to prove Theorem \ref{tree}.

Let $(X, Y)$ be the bipartition of $K_{m, n}$ with $|X|=m$ and $|Y|=n$.
Note that for any $F\subseteq E(G)$, there exists $\tau_F(G)=\tau(G/F)$.
We observe that for any two trees $T_1$ and $T_2$ in $K_{m, n}$,
if $|V(T_1)\cap X|=|V(T_2)\cap X|$, and $|V(T_1)\cap Y|=|V(T_2)\cap Y|$,
then $\tau_{T_1}(K_{m, n})=\tau_{T_2}(K_{m, n})$
since $K_{m, n}/T_1$ and $K_{m, n}/T_2$ are isomorphic,
where for a subgraph $H$ in a graph $G$,
$G/H$ is the graph $G/E(H)$.
So when we count the number of spanning trees in
$K_{m, n}$ containing a certain tree $T$, the only
thing matters is the pair of numbers $s$ and $t$,
where $s=|V(T)\cap X|$ and $t=|V(T)\cap Y|$.

Now let $\tau_{s,t}(K_{m, n})$ be the number of spanning trees of
$K_{m, n}$ containing any given $T$ as a subgraph such that
$|V(T)\cap X|=s$ and $|V(T)\cap Y|=t$.
Then $\tau_{0, 0}(K_{m, n})=
\tau_{1, 0}(K_{m, n})=\tau_{0, 1}(K_{m, n})=\tau(K_{m, n})$.
Lemma \ref{t1} tells that $\tau_{1, 1}(K_{m, n})=\frac{m+n-1}{mn}\cdot\tau(K_{m, n}).$

\begin{theorem}\label{tree-ratio}
(1) For any $1\leq s\leq m$, $1\leq t\leq n-1$,
$$\frac{\tau_{s, t+1}(K_{m, n})}{\tau_{s, t}(K_{m, n})}=\frac{sn+(m-s)(t+1)}{m[sn+(m-s)t]}.$$

(2) For any $1\leq s\leq m-1$, $1\leq t\leq n$,
$$\frac{\tau_{s+1, t}(K_{m, n})}{\tau_{s, t}(K_{m, n})}=\frac{tm+(n-t)(s+1)}{n[tm+(n-t)s]}.$$
\end{theorem}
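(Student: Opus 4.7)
\noindent\textbf{Proof proposal for Theorem \ref{tree-ratio}.}

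The plan is to follow exactly the template of the proof of Theorem \ref{mathcing-ratio}: realize the ratio as an effective resistance via Theorem \ref{res-spanning}, and then compute that resistance by injecting a unit current and determining the current on every edge from symmetry, Kirchhoff's laws and Ohm's law. For part (1), fix a tree $T\subseteq K_{m,n}$ with $|V(T)\cap X|=s$, $|V(T)\cap Y|=t$, and let $v_T$ denote the vertex of $K_{m,n}/T$ obtained by contracting $T$. If $y\in Y\setminus V(T)$, then extending $T$ by a new edge $xy$ with $x\in V(T)\cap X$ yields a tree $T'$ with $s$ $X$-vertices and $t+1$ $Y$-vertices, and $K_{m,n}/T'\cong(K_{m,n}/T)/\{v_T,y\}$. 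By Theorem \ref{res-spanning},
$$\frac{\tau_{s,t+1}(K_{m,n})}{\tau_{s,t}(K_{m,n})}=\frac{\tau\bigl((K_{m,n}/T)/\{v_T,y\}\bigr)}{\tau(K_{m,n}/T)}=R_{v_T,y}(K_{m,n}/T),$$
so it suffices to compute this single effective resistance.

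I would first simplify $K_{m,n}/T$ by the parallel law: the $t$ parallel edges from $v_T$ to each $x'\in X\setminus V(T)$ collapse into one edge of resistance $1/t$, the $s$ parallel edges from $v_T$ to each $y'\in Y\setminus V(T)$ collapse into one edge of resistance $1/s$, and the remaining edges between $X\setminus V(T)$ and $Y\setminus V(T)$ keep unit resistance. Label the distinguished sink $y_1\in Y\setminus V(T)$, inject unit current at $v_T$ and extract it at $y_1$. The symmetries among the remaining $m-s$ vertices in $X\setminus V(T)$ and the remaining $n-t-1$ vertices in $Y\setminus(V(T)\cup\{y_1\})$ force the ansatz that there are only five current values: on $v_Ty_1$, on each $v_Ty_j$ ($j\neq 1$), on each $v_Tx_i$, on each $x_iy_1$, and on each $x_iy_j$ ($j\neq 1$). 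Writing down Kirchhoff's current law at $v_T$, at $y_1$, at any $y_j$ ($j\neq 1$) and at any $x_i$, together with the voltage law on a fundamental cycle basis built from a star at $v_T$ (as was done in the proof of Theorem \ref{mathcing-ratio}), produces a small linear system in these five unknowns. Solving it and applying Ohm's law to the edge $v_Ty_1$ of resistance $1/s$ gives $R_{v_T,y_1}$, which after simplification should equal $\frac{sn+(m-s)(t+1)}{m[sn+(m-s)t]}$, as required.

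Part (2) is obtained from part (1) by swapping the roles of $X$ and $Y$: the analogous computation with unit current injected at $v_T$ and extracted at some $x_1\in X\setminus V(T)$ yields $R_{v_T,x_1}(K_{m,n}/T)=\frac{tm+(n-t)(s+1)}{n[tm+(n-t)s]}$, and Theorem \ref{res-spanning} converts this into the stated ratio.

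The main obstacle is purely computational. The structural setup and the use of Theorem \ref{res-spanning} are forced by the analogy with Theorem \ref{mathcing-ratio}; what is new is that the contracted network carries two non-unit resistance values $1/s$ and $1/t$ side by side, so the ansatz for the five currents and the algebraic simplification of the resulting linear system are a bit more delicate than in the matching case. Once the currents are correctly guessed, verification of Kirchhoff's voltage law on the fundamental cycles through $v_T$ is mechanical and exactly parallels the cycle-by-cycle check already carried out in Theorem \ref{mathcing-ratio}.
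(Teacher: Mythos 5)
Your proposal is correct and follows essentially the same route as the paper: contract $T$, identify the ratio with the effective resistance $R_{v_T,y}$ via Theorem \ref{res-spanning}, reduce parallel edges to single edges of resistance $1/t$ and $1/s$, and determine the currents from a five-value symmetry ansatz checked against Kirchhoff's laws, with part (2) by exchanging the roles of $X$ and $Y$. The only difference is that the paper exhibits the five currents explicitly (in terms of $N=sn+tm-st$) and verifies the laws directly rather than solving the linear system, which is the computational step you deferred.
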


\begin{proof}
We only prove part (1), after that, part (2) remains the same.

Let tree $T$ be a subgraph of $K_{m, n}$.
Let $s=|V(T)\cap X|$, and $t=|V(T)\cap Y|$,
where $(X, Y)$ is the bipartition
of $K_{m, n}$ with $|X|=m$ and $|Y|=n$.
Assume that $X=\{x_1, x_2, \ldots, x_m\}$,
$Y=\{y_0, y_1, \ldots, y_{n-1}\}$, and
$V(T)=\{x_{m-s+1}, x_{m-s+2}, \ldots, x_m\}
\cup \{ y_{n-t}, y_{n-t+1}, \ldots, y_{n-1}\}$.
Now we consider the graph $K_{m, n}/T$, in which
all vertices in $V(T)$ are identified
as a single vertex $z$, as shown in
Figure \ref{tree-contract}.

\begin{figure}[htbp]
\centering
\includegraphics[width=10cm]{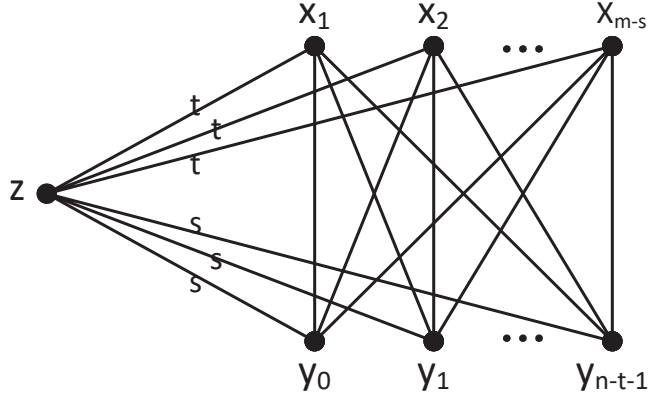}
\renewcommand{\figurename}{Fig.}
\caption{{\footnotesize
Graph $K_{m, n}/T$, where $|V(T)\cap X|=s$ and $|V(T)\cap Y|=t$. Edges
labelled by $s$ or $t$ means they are multiple edges with multiplicity $s$ or $t$.
}}\label{tree-contract}
\end{figure}

We view the $K_{m, n}/T$ as an electrical network.
Assume a unit
current enters at $z$ and leaves at $y_0$
and each edge of $K_{m, n}/T$ has
unit resistance.
Our aim is to obtain the effective resistance $R_{zy_0}$.

Applying the parallel law,
we can replace $q$ parallel edges between two vertices
$u$ and $v$ by a single edge $uv$ with resistance $\frac{1}{q}$.
Thus, for any $i$ with $1\leq i \leq m-s$,
all $t$ parallel edges between $z$ and $x_i$
can be replaced by one single edge $zx_i$ with resistance $\frac{1}{t}$,
and for any $j$ with $0\leq j \leq n-t-1$,
all $s$ parallel edges between $z$ and $y_j$
can be replaced by one single edge $zy_j$ with resistance $\frac{1}{s}$.
We denote the resulting graph by $G''$. Then $R_{zy_0}(K_{m, n}/T)=R_{zy_0}(G'')$.
In what follows, we consider the electrical network $G''$ with a unit
current enters at $z$ and leaves at $y_0$.

Write $N=sn+tm-st$. To each edge of $G''$
we assign a current as follows:

\begin{displaymath}
\left\{ \begin{array}{ll}
I_{zy_0}=w_0=\frac{s}{m}+\frac{s(m-s)}{mN}, &  \\
I_{zy_i}=w_1=\frac{s(m-s)}{mN}, & \textrm{$1\leq i\leq n-t-1$,} \\
I_{zx_i}=w_2=\frac{t}{N}, & \textrm{$1\leq i\leq m-s$,} \\
I_{x_iy_0}=w_3=\frac{1}{m}-\frac{s}{mN}, & \textrm{$1\leq i\leq m-s$,} \\
I_{x_iy_j}=w_4=-\frac{s}{mN}, & \textrm{$1\leq i\leq m-s$, $1\leq j\leq n-t-1$.} \\
\end{array} \right.
\end{displaymath}

First we check Kirchhoff's current law for each vertex:
\begin{equation*}
\begin{array}{ll}
\mbox{at vertex }z:
&-1+\sum\limits_{u: u\sim z} I_{zu}=-1+(m-s)w_2+w_0+(n-t-1)w_1=0; \\
\mbox{at vertex } y_0:
&1+\sum\limits_{u: u\sim y_0} I_{y_0u}=1-w_0-(m-s)w_3=0; \\
\mbox{at vertex } x_i ~(1\leq i\leq m-s):
&\sum\limits_{u: u\sim x_{i}} I_{x_{i}u}=-w_2+w_3+(n-t-1)w_4=0; \\
\mbox{at vertex } y_i ~(1\leq i\leq n-t-1):
&\sum\limits_{u: u\sim y_{i}} I_{y_{i}u}=-w_1-(m-s)w_4=0.
\end{array}
\end{equation*}

Then we check Kirchhoff's voltage law for cycles.
Similar to the proof of Theorem \ref{mathcing-ratio},
we only need to check those cycles in a cycle basis.

Let $T$ be a spanning tree of $G''$
that is the star with $z_1$ as its center.
For any $e\in E(G'')-E(T)$, we have a unique
cycle $C_{e}$ consisting of $e$ and
the unique path in $T$ connecting the endpoints of $e$.
All these cycles form a
a special basis for the cycle space.
We check Kirchhoff's
voltage law for all these cycles:

for the cycle $C_{zx_iy_0}$ ($z\rightarrow x_i\rightarrow y_0\rightarrow z$):

\begin{eqnarray*}
\ U_{zx_i}+U_{x_iy_0}+U_{y_0z} & = & I_{zx_i}r_{zx_i}+I_{x_iy_0}r_{x_iy_0}+I_{y_0z}r_{y_0z} \\
& = & w_2\cdot \frac{1}{t}+w_3\cdot 1-w_0\cdot \frac{1}{s} \\
& = & \frac{1}{N}+\frac{1}{m}-\frac{s}{mN}-\left(\frac{1}{m}+\frac{m-s}{mN}\right) \\
& = & 0;
\end{eqnarray*}

and for the cycle $C_{zx_iy_j}$
($z\rightarrow x_i\rightarrow y_j\rightarrow z$),
where $j\neq 0$:

\begin{eqnarray*}
\ U_{zx_i}+U_{x_iy_j}+U_{y_jz} & = & I_{zx_i}r_{zx_i}+I_{x_iy_j}r_{x_iy_j}+I_{y_jz}r_{y_jz} \\
& = & w_2\cdot \frac{1}{t}+w_4\cdot 1-w_1\cdot \frac{1}{s} \\
& = & \frac{1}{N}-\frac{s}{mN}-\frac{s(m-s)}{mN}\cdot\frac{1}{s} \\
& = & 0.
\end{eqnarray*}

Therefore, the resistances and currents we assigned satisfy the Kirchhoff's laws.
By the definition of effective resistance, we obtain
$$R_{zy_0}(K_{m, n}/T)=R_{zy_0}(G'')=\frac{U_{zy_0}}{1}=I_{zy_0}r_{zy_0}=w_0\cdot\frac{1}{s}
=\frac{sn+(m-s)(t+1)}{m[sn+(m-s)t]}.$$
Then, following from Theorem \ref{res-spanning}, we have
\begin{eqnarray*}
\ \frac{\tau_{s, t+1}(K_{m, n})}{\tau_{s, t}(K_{m, n})}
& = & \frac{\tau\left((K_{m, n}/T)/zy_0\right)}{\tau(K_{m, n}/T)} \\
& = & \frac{\tau\left((K_{m, n}/T)/\{z, y_0\}\right)}{\tau(K_{m, n}/T)} \\
& = & R_{zy_0}(K_{m, n}/T) \\
& = & \frac{sn+(m-s)(t+1)}{m[sn+(m-s)t]}.
\end{eqnarray*}
\end{proof}

\noindent
{\bf Proof of Theorem \ref{tree}.}
When $1\leq s\leq m$, and $1\leq t\leq n$,
by using Theorem \ref{tree-ratio}, we obtain
\begin{eqnarray}
\ \tau_{s, t}(K_{m, n}) & = & \tau_{s, 1}(K_{m, n})\cdot
\prod_{i=1}^{t-1} \frac{\tau_{s, i+1}(K_{m, n})}{\tau_{s, i}(K_{m, n})}
\nonumber  \\
& = & \tau_{1, 1}(K_{m, n})\cdot
\prod_{i=1}^{s-1} \frac{\tau_{i+1, 1}(K_{m, n})}{\tau_{i, 1}(K_{m, n})}\cdot
\prod_{i=1}^{t-1} \frac{\tau_{s, i+1}(K_{m, n})}{\tau_{s, i}(K_{m, n})}
\nonumber  \\
& = & \frac{m+n-1}{mn}\cdot\tau(K_{m, n})\cdot
\prod_{i=1}^{s-1} \frac{m+(n-1)(i+1)}{n[m+(n-1)i]}\cdot
\prod_{i=1}^{t-1} \frac{sn+(m-s)(i+1)}{m[sn+(m-s)i]}
\nonumber  \\
& = & \frac{m+n-1}{mn}\cdot\tau(K_{m, n})\cdot\frac{m+(n-1)s}{n^{s-1}(m+n-1)}\cdot
\frac{sn+(m-s)t}{m^{t-1}(sn+m-s)}
\nonumber \\ & = & \frac{sn+tm-st}{m^sn^t}\cdot\tau(K_{m, n}).
\label{th1-4-e1}
\end{eqnarray}

For $s=m$, $t=n$,
(\ref{th1-4-e1}) implies that
$\tau_{m, n}(K_{m, n})=\frac{\tau(K_{m, n})}{m^{n-1}n^{m-1}}$.
Since $\tau_{m, n}(K_{m, n})$
is the number of spanning trees
of $K_{m, n}$ which contain a given spanning tree,
obviously $\tau_{m, n}(K_{m, n})=1$.
It follows immediately
that $\tau(K_{m, n})=m^{n-1}n^{m-1}$.
By (\ref{th1-4-e1}) again,
when $2\leq s\leq m$, and $2\leq t\leq n$, we have
$\tau_{s,t}(K_{m, n})=(sn+tm-st)m^{n-t-1}n^{m-s-1}$.
Theorem \ref{tree} holds in this case.

Since $\tau_{0, 0}(K_{m, n})=\tau_{1, 0}(K_{m, n})=\tau_{0, 1}(K_{m, n})=\tau(K_{m, n})=m^{n-1}n^{m-1}$,
it is easy to see that Theorem \ref{tree} holds for $(s, t)\in \{(0,0), (1,0), (0,1)\}$.
It is not possible for $s=0, t\geq 2$ or $s\geq 2, t=0$.
So Theorem \ref{tree} holds for all remaining cases. The proof is complete. {\hfill$\Box$}

\begin{remark}
It is surprising that throughout the proof of Theorem \ref{tree},
we do not need the exact value of $\tau(K_{m, n})$ as the initial data.
On the contrary, we can deduce the formula
$\tau(K_{m, n})=m^{n-1}n^{m-1}$ from
the proof.
\end{remark}

Theorem \ref{tree} can also be deduced from Kirchoff's Matrix-Tree Theorem.
Let $G$ be a multigraph without loops. Let $A(G)$ be its adjacency matrix
and $D(G)$ its diagonal matrix. We call $L(G)=D(G)-A(G)$ the Laplacian
of $G$. Let $L(G)_{i,j}$ denote the matrix obtained from $L(G)$ by deleting
the $i$-th row and $j$-th column. Sometimes we call this submatrix a
reduced Laplacian of $G$.

\begin{theorem}[Kirchoff's Matrix-Tree Theorem, \cite{Kirchhoff}]\label{Matrix-Tree}
$$\tau(G)=(-1)^{i+j}\det(L(G)_{ij}).$$
\end{theorem}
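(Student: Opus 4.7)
The plan is to use the classical incidence-matrix and Cauchy--Binet proof. First, fix an arbitrary orientation of the edges of $G$ and let $B$ be the signed vertex-edge incidence matrix with rows indexed by $V(G)$ and columns by $E(G)$, given by $B_{v,e}=+1$ if $v$ is the head of $e$, $-1$ if $v$ is its tail, and $0$ otherwise. An entrywise check shows $L(G)=BB^T$ independently of the chosen orientation, which is the bridge that converts the graph question into a linear-algebra one.

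Second, I would reduce the general $(i,j)$ cofactor to a principal one. Because every row and every column of $L(G)$ sums to zero, a standard argument (add all other rows to row $i$ to produce a zero row, then invoke $L\cdot\operatorname{adj}(L)=\det(L)\,I=0$) forces all signed cofactors $(-1)^{i+j}\det(L(G)_{ij})$ to coincide. Hence it suffices to prove $\det(L(G)_{ii})=\tau(G)$ for a single fixed index $i$. Let $B_0$ be $B$ with row $i$ deleted, so $L(G)_{ii}=B_0 B_0^T$, and apply the Cauchy--Binet formula:
$$
\det(L(G)_{ii})=\sum_{S}\det(B_0[S])^2,
$$
where $S$ runs over all $(n-1)$-subsets of $E(G)$ and $B_0[S]$ denotes the square submatrix of $B_0$ formed by the columns indexed by $S$.

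The core step, and the one I expect to be hardest to write cleanly, is evaluating $\det(B_0[S])$: I would show $|\det(B_0[S])|=1$ when the edges in $S$ form a spanning tree of $G$, and $\det(B_0[S])=0$ otherwise. The zero case is the easier half --- if $S$ contains a cycle $C$, then the signed sum of the columns of $B$ indexed by $C$ is zero (the cyclic traversal cancels contributions vertex by vertex), which descends to $B_0$ and gives a linear dependence; any $(n-1)$-edge non-spanning-tree must contain a cycle by the usual counting argument. For the spanning-tree case, I would induct on $n$: pick a leaf $\ell\neq v_i$ (which exists whenever $n\geq 2$), note that the column of the edge incident to $\ell$ has a single $\pm 1$ entry in row $\ell$ in $B_0[S]$, cofactor-expand along row $\ell$, and recurse on the spanning tree of $G-\ell$ with $S$ minus that edge.

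Assembling the pieces, each spanning tree of $G$ contributes exactly $1$ to the Cauchy--Binet sum and no other subset contributes anything, so $\det(L(G)_{ii})=\tau(G)$; the earlier reduction among cofactors promotes this to the full statement $(-1)^{i+j}\det(L(G)_{ij})=\tau(G)$ for every $i,j$. The main obstacle is the spanning-tree leaf induction: one must verify that a leaf distinct from $v_i$ always exists (standard except in trivial base cases) and keep careful track of signs under the cofactor expansion. Once that combinatorial step is in hand, everything else is bookkeeping.
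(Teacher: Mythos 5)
The paper does not prove this statement at all: it is quoted as a classical result with only a citation to Kirchhoff's 1847 paper, and is used later as a black box in the ``second proof'' of Theorem \ref{tree}. So there is no in-paper argument to compare yours against; what you have written is the standard incidence-matrix proof, and it is correct. The factorization $L(G)=BB^{T}$, the observation that all signed cofactors of a matrix with zero row and column sums coincide (via $L\cdot\operatorname{adj}(L)=\operatorname{adj}(L)\cdot L=0$, so that for connected $G$ the rows and columns of $\operatorname{adj}(L)$ all lie in the one-dimensional kernel spanned by the all-ones vector, while for disconnected $G$ both sides of the identity vanish), the Cauchy--Binet expansion $\det(B_{0}B_{0}^{T})=\sum_{S}\det(B_{0}[S])^{2}$, and the dichotomy $|\det(B_{0}[S])|\in\{0,1\}$ according to whether $S$ is a spanning tree are all sound. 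Two small points worth making explicit if you write this up in full: the paper allows parallel edges, so your cycle-dependence argument should note that two parallel edges already form a cycle of length two (their columns are equal up to sign), and the leaf-induction base case is the $0\times 0$ matrix with determinant $1$ when the tree has been pruned down to the deleted vertex $v_{i}$. Neither is an obstacle; the argument goes through verbatim for loopless multigraphs, which is the generality the paper needs.
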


\begin{lemma}[Schur's determinant identity, \cite{Schur}]\label{Schur}
Suppose $M$ is a square matrix that can be decomposed into blocks as
$$M=\left[
\begin{array}{ll}
{A} & {B} \\
{C} & {D}
\end{array}\right],$$
where $A$ and $D$ are square, and $D$ is invertible. Then
$$\det(M)=\det(D)\cdot\det(A-BD^{-1}C).$$
\end{lemma}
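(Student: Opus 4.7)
The plan is to prove Schur's identity by a single block row-operation that reduces $M$ to a block-triangular form whose determinant can then be read off immediately. The invertibility of $D$ is precisely what allows the elimination to be carried out.

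First I would eliminate the upper-right block of $M$ by premultiplying by the block-upper-triangular matrix
\begin{equation*}
E = \begin{pmatrix} I & -BD^{-1} \\ 0 & I \end{pmatrix},
\end{equation*}
which is well-defined because $D$ is invertible and whose block sizes I would choose to match the block structure of $M$. A direct block-matrix computation yields
\begin{equation*}
EM = \begin{pmatrix} A - BD^{-1}C & 0 \\ C & D \end{pmatrix},
\end{equation*}
i.e.\ a block-lower-triangular matrix whose diagonal blocks are the Schur complement and $D$ itself.

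Next I would take determinants of both sides. Because $E$ is block upper-triangular with identity blocks on the diagonal, $\det(E)=1$; because $EM$ is block lower-triangular, its determinant equals the product of the determinants of its diagonal blocks, namely $\det(A-BD^{-1}C)\cdot \det(D)$. Multiplicativity of the determinant then gives
\begin{equation*}
\det(M) \;=\; \det(E)^{-1}\det(EM) \;=\; \det(D)\,\det(A - BD^{-1}C),
\end{equation*}
which is the claimed identity.

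There is no real obstacle in this argument; the only auxiliary fact that must be invoked is that the determinant of a block-triangular matrix equals the product of the determinants of its diagonal blocks. This is standard and can be proved, for example, by induction on the total size (expanding along the rows of the zero off-diagonal block) or by a further triangular factorisation of the triangular matrix itself. No hypothesis on $M$ beyond the invertibility of $D$ is required, and the argument is valid over any commutative ring in which $D^{-1}$ exists.
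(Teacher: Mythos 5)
Your proof is correct. Note that the paper does not actually prove this lemma --- it is stated with a citation to Schur's 1917 paper and used as a known tool in the second proof of Theorem \ref{tree} --- so there is no internal argument to compare against. Your block-elimination argument is the standard one: the computation $EM=\left[\begin{array}{ll}{A-BD^{-1}C} & {0}\\ {C} & {D}\end{array}\right]$ is right (the upper-right block is $B-BD^{-1}D=0$), $\det(E)=1$, and the only auxiliary fact you need, that the determinant of a block-triangular matrix is the product of the determinants of its diagonal blocks, is standard and provable by the induction you sketch. This is a complete and appropriately general proof of the cited identity.
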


\noindent
{\bf Second proof of Theorem \ref{tree}.}
Let $K_{m,n}/T$ be the graph show in Figure \ref{tree-contract}.
It is easy to see that
\begin{equation}
D(K_{m,n}/T)=\text{diag}\{tm+sn-2st, \underbrace{n, \ldots, n}_{m-s}, \underbrace{m, \ldots, m}_{n-t}\},
\end{equation}
and
$$A(K_{m,n}/T)=\left[
\begin{array}{lll}
{0}      & {{\bf 1}}                     & {{\bf 1}} \\
{\bf{1}} & {{\bf 0}_{m-s}}               & {{\bf 1}_{(m-s)\times(n-t)}} \\
{\bf{1}} & {{\bf 1}_{(n-t)\times(m-s)}}  & {{\bf 0}_{n-t}}
\end{array}\right].$$
Therefore,
$$L(K_{m,n}/T)_{1,1}=\left[
\begin{array}{ll}
{nI_{m-s}}                   & {-{\bf 1}_{(m-s)\times(n-t)}} \\
{-{\bf 1}_{(n-t)\times(m-s)}} & {mI_{n-t}}
\end{array}\right].$$

By Theorem~\ref{Matrix-Tree} and Lemma~\ref{Schur},
we obtain
\begin{eqnarray*}
\ \tau_{s,t}(K_{m, n})
& = & \tau(K_{m,n}/T) \\
& = & \det
\left(
\left[
\begin{array}{ll}
{nI_{m-s}}                   & {-\bf{1}_{(m-s)\times(n-t)}} \\
{-\bf{1}_{(n-t)\times(m-s)}} & {mI_{n-t}}
\end{array}\right]\right) \\
& = & \det(mI_{n-t})\cdot \det\left(nI_{m-s}-
(-{\bf 1}_{(m-s)\times(n-t)})(mI_{n-t})^{-1}(-{\bf 1}_{(n-t)\times(m-s)})\right) \\
& = & m^{n-t}\det\left(nI_{m-s}-
\frac{1}{m}(-{\bf 1}_{(m-s)\times(n-t)})\cdot(-{\bf 1}_{(n-t)\times(m-s)}) \right) \\
& = & m^{n-t}\det\left(nI_{m-s}-\frac{n-t}{m}{\bf 1}_{(m-s)} \right) \\
& = & m^{n-t}\cdot\left(n-\frac{n-t}{m}+(m-s-1)\cdot\left(-\frac{n-t}{m}\right)\right)\cdot n^{m-s-1} \\
& = & (sn+tm-st)m^{n-t-1}n^{m-s-1}.
\end{eqnarray*}
The second-to-last equality above follows from
the following identity:
$$
\det\left(\left[
\begin{array}{llll}
{a}      & {b}      & {\cdots} & {b} \\
{b}      & {a}      & {\cdots} & {b} \\
{\vdots} & {\vdots} & {\ddots} & {\vdots} \\
{b}      & {b}      & {\cdots} & {a} \\
\end{array}\right]_{n\times n}\right)
=\left(a+(n-1)b\right)(a-b)^{n-1}.
$$
{\hfill$\Box$}

\section{Effective resistances in nearly complete bipartite graphs}

In this section,
we first introduce some known results on
effective resistances, and then apply them
to obtain the effective resistances of nearly
complete bipartite graphs. As applications,
we obtain the number of spanning trees (Theorem \ref{nearly}) and the
Kirchhoff indices of nearly complete bipartite graphs.

\subsection{Known results on effective resistances
\label{sec5-1}}

Foster \cite{Foster} obtained a famous result
on the sum of effective resistances for all edges as follows.

\begin{theorem}[\cite{Foster}]\label{global}
Let $G$ be a connected graph with $n$ vertices
in which each edge $ij$ receives unit resistance.
Then
$$\sum_{ij\in E(G)} R_{ij}=n-1.$$
\end{theorem}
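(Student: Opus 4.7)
The plan is to exploit the spanning-tree interpretation of effective resistance provided by Theorem \ref{res-spanning}. For any edge $e=ij$ of $G$ carrying unit resistance, contracting $e$ is the same operation as identifying its endpoints and deleting the resulting loops, so $G/e=G/\{i,j\}$. Combining this with Theorem \ref{res-spanning} and the general identity $\tau_e(G)=\tau(G/e)$ (noted earlier in the excerpt for any edge subset $F$) yields
$$R_{ij}=\frac{\tau(G/\{i,j\})}{\tau(G)}=\frac{\tau_e(G)}{\tau(G)},$$
where $\tau_e(G)$ is the number of spanning trees of $G$ containing $e$. In other words, for unit resistances the effective resistance across an edge equals the probability that that edge belongs to a uniformly random spanning tree of $G$.

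The second (and essentially final) step is to sum this identity over all edges and invoke linearity of expectation. Let $T$ be a spanning tree chosen uniformly at random from $\mathcal{T}(G)$. Then
$$\sum_{ij\in E(G)} R_{ij}=\sum_{e\in E(G)}\Pr[\,e\in E(T)\,]=\mathbb{E}\bigl[\,|E(T)|\,\bigr].$$
Since every spanning tree of a connected graph on $n$ vertices has exactly $n-1$ edges, $|E(T)|\equiv n-1$, and the right-hand side collapses to $n-1$, which is the claimed identity.

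I do not expect any real obstacle here. The only mild subtlety worth stating explicitly is the equality $G/e=G/\{i,j\}$ (under the convention of discarding loops), so that the specialization $R_e=\tau_e(G)/\tau(G)$ of Theorem \ref{res-spanning} applies cleanly to a genuine edge; once this is observed, the proof reduces to a one-line probabilistic identity. An alternative route through the Matrix-Tree theorem (summing cofactors of the Laplacian) is available but considerably more computational, whereas this spanning-tree sampling argument is both short and conceptually transparent, and it uses only tools already recorded in the excerpt.
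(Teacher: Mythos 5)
Your proposal is correct and follows essentially the same route as the paper: both reduce $R_{ij}$ to $\tau(G/ij)/\tau(G)$ via Theorem \ref{res-spanning} and then count incidences between edges and spanning trees, with your linearity-of-expectation step being exactly the paper's double count of the set $\{(e,T): e\in E(T)\}$ divided by $\tau(G)$. No gaps.
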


Tetali \cite{Tetali91, Tetali94} gave two beautiful probabilistic proofs of Foster's Theorem.
It is natural to see this fact by considering the relation between effective resistance
and the number of spanning trees of graphs. We give a short proof of Foster's Theorem
based on Theorem \ref{res-spanning}.

\noindent
{\bf Proof of Theorem \ref{global}.}
By Theorem \ref{res-spanning}, we have
$R_{ij}=\frac{\tau(G/\{i,j\})}{\tau(G)}
=\frac{\tau(G/ij)}{\tau(G)}$ for each $ij\in E(G)$.
Then
$$
\sum_{ij\in E(G)} R_{ij}
=\sum_{e\in E(G)} \frac{\tau(G/e)}{\tau(G)}=
\frac 1{\tau(G)} \sum_{e\in E(G)}\tau(G/e).
$$
Since for any $e\in E(G)$,
$\tau(G/e)$ is  the number of spanning trees of $G$ containing edge $e$,
$\sum\limits_{e\in E(G)}\tau(G/e)$ is the size of the following set:
$$
\Phi=\{(e,T): e\in E(T),
T \mbox{ is a spanning tree of } G \mbox{ with } e\in E(T)\}.
$$
For each spanning tree $T$,
$\Phi$ contains exactly $n-1$ elements,
implying that $|\Phi|=\tau(G)\cdot (n-1)$.
Thus
$$
\sum_{ij\in E(G)}\tau(G/ij)=(n-1)\tau(G)
$$
and
$$\sum_{ij\in E(G)} R_{ij}=n-1.$$
{\hfill$\Box$}

It was noted in \cite{Tetali94} that Foster's Theorem can be extended to graphs
with arbitrary resistances.

\begin{theorem}[\cite{Foster, Tetali94}]\label{global-2}
Let $G$ be a connected graph with $n$ vertices. Each edge $ij$
has resistance $r_{ij}$. Then
$$\sum_{ij\in E(G)} \frac{R_{ij}}{r_{ij}}=n-1.$$
\end{theorem}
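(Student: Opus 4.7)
The plan is to extend the double-counting argument given above for Theorem~\ref{global} to the weighted setting. View $G$ as a weighted graph with $w_{ij}=1/r_{ij}$; then Theorem~\ref{res-spanning} gives, for every edge $ij\in E(G)$,
$$R_{ij}=\frac{\tau(G/\{i,j\})}{\tau(G)}=\frac{\tau(G/ij)}{\tau(G)},$$
where $\tau$ now denotes the weighted spanning-tree polynomial $\tau(G)=\sum_{T\in\mathcal{T}(G)}w_T$.

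The first step is to rewrite each summand as
$$\frac{R_{ij}}{r_{ij}} \;=\; w_{ij}\cdot\frac{\tau(G/ij)}{\tau(G)} \;=\; \frac{w_{ij}\,\tau(G/ij)}{\tau(G)}.$$
The key observation is then that $w_{ij}\,\tau(G/ij)=\sum_{T\ni ij} w_T$, the sum being over all spanning trees $T$ of $G$ that contain the edge $ij$. This follows from the obvious weight-preserving correspondence $T\longleftrightarrow T/ij$ between spanning trees of $G$ containing $ij$ and spanning trees of $G/ij$: since $w_T$ is the product of the edge weights of $T$, we have $w_T=w_{ij}\cdot w_{T/ij}$.

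Summing over $ij\in E(G)$ and swapping the order of summation yields
$$\sum_{ij\in E(G)}\frac{R_{ij}}{r_{ij}} \;=\; \frac{1}{\tau(G)}\sum_{ij\in E(G)}\sum_{T\ni ij} w_T \;=\; \frac{1}{\tau(G)}\sum_{T\in\mathcal{T}(G)} |E(T)|\cdot w_T \;=\; \frac{n-1}{\tau(G)}\sum_{T\in\mathcal{T}(G)} w_T \;=\; n-1,$$
because every spanning tree of $G$ has exactly $n-1$ edges. There is essentially no obstacle in this argument; the only point to verify carefully is the weight-preserving bijection between spanning trees of $G$ through $ij$ and spanning trees of $G/ij$, which is a direct consequence of the definition of $w_T$. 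All the remaining manipulations mirror the proof of Theorem~\ref{global} given above, with the single substitution of weighted spanning-tree counts in place of ordinary counts.
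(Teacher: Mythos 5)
Your proof is correct and follows exactly the route the paper intends: the paper does not write this proof out, but remarks that it ``can also be proved by considering $R_{ij}=\frac{\tau(G/\{i,j\})}{\tau(G)}$'' and is ``almost the same'' as the proof of Theorem~\ref{global}. Your weighted double-counting, with the key identity $w_{ij}\,\tau(G/ij)=\sum_{T\ni ij}w_T$ and the observation that every spanning tree contributes $|E(T)|\cdot w_T=(n-1)w_T$, supplies precisely the details the paper leaves to the reader.
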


Theorem \ref{global-2} can also be proved by considering
$R_{ij}=\frac{\tau(G/\{i, j\})}{\tau(G)}$. The proof is almost the
same. We leave it to the readers.

We call Foster's Theorem the global rule for the effective resistance of graphs.
In 2008, Chen and Zhang \cite{CZ} proved a local sum rule for the resistance of graphs.

\begin{theorem}[\cite{CZ}]\label{local}
Let $G$ be a simple connected graph with $n$ vertices.
Each edge is assigned unit resistance. For any two vertices $u$ and $v$ of $G$,
$$d(u)R_{uv}+\sum_{x\in N(u)} (R_{ux}-R_{vx})=2,$$
where $d(u)$ is the degree of vertex $u$ and $N(u)$ is the set of neighbors of $u$ in $G$.
\end{theorem}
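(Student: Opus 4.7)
The plan is to introduce an auxiliary electrical network on $G$ with unit resistance on each edge, inject a unit current at $u$, extract a unit current at $v$, and let $\phi$ denote the resulting potential, normalized so that $\phi(v)=0$. By Ohm's law and the definition of effective resistance, $\phi(u)=R_{uv}$, and the current along each edge $ab$ equals $\phi(a)-\phi(b)$. All of the sum rule's ingredients will emerge by reading $\phi$ at the neighbors of $u$.

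The crux is to establish the closed form
$$\phi(w)=\tfrac{1}{2}\bigl(R_{uv}+R_{vw}-R_{uw}\bigr)\qquad\text{for every }w\in V(G).$$
I would prove this by superposition: adding the current configuration ``inject $1$ at $u$, extract $1$ at $w$'' to the configuration ``inject $1$ at $w$, extract $1$ at $v$'' reproduces the original configuration, so the corresponding potentials agree with $\phi$ up to an additive constant. Comparing values at $u$, $v$, and $w$, and invoking the standard reciprocity symmetry to rewrite the cross-term in resistance form, yields the identity. An entirely equivalent route is to use the Moore--Penrose pseudoinverse $L^{+}$ of the Laplacian, writing $\phi(w)=L^{+}_{wu}-L^{+}_{wv}+\text{const.}$ and $R_{ab}=L^{+}_{aa}+L^{+}_{bb}-2L^{+}_{ab}$; the identity then reduces to a one-line algebraic check.

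With the identity secured, the theorem follows from Kirchhoff's current law applied at the vertex $u$. Since exactly one unit of current enters at $u$,
$$d(u)\phi(u)-\sum_{x\in N(u)}\phi(x)=1.$$
Substituting $\phi(u)=R_{uv}$ and $\phi(x)=\tfrac{1}{2}(R_{uv}+R_{vx}-R_{ux})$ collapses the $R_{uv}$ contribution to $\tfrac{d(u)}{2}R_{uv}$, leaving
$$\tfrac{d(u)}{2}R_{uv}+\tfrac{1}{2}\sum_{x\in N(u)}(R_{ux}-R_{vx})=1,$$
which is equivalent to the claimed identity after multiplying by $2$.

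The main obstacle is establishing the closed-form expression for $\phi(w)$ cleanly. The superposition argument is conceptually transparent but requires careful bookkeeping of the additive constants fixed by the three distinct reference-potential choices, since the final formula depends on a precise linear combination of three resistances. Once this intermediate identity is in hand, the remainder of the proof is a single application of Kirchhoff's current law at $u$ followed by routine algebraic simplification, so the entire argument hinges on this one lemma about $\phi(w)$.
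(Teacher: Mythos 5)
The paper does not prove this statement at all: Theorem~\ref{local} is quoted verbatim from Chen and Zhang~\cite{CZ}, so there is no in-paper argument to compare yours against. Your blind proof is nevertheless correct and self-contained. The key identity $\phi(w)=\tfrac{1}{2}\left(R_{uv}+R_{vw}-R_{uw}\right)$ is exactly right; of your two proposed routes, the pseudoinverse one is the cleaner, since writing $\phi=L^{+}(e_u-e_v)+c$ with $c$ fixed by $\phi(v)=0$ and substituting $R_{ab}=L^{+}_{aa}+L^{+}_{bb}-2L^{+}_{ab}$ verifies the identity in one line (using symmetry of $L^{+}$), whereas the superposition route needs the reciprocity symmetry and the constant-tracking you yourself flag as the delicate point. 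The final step is also sound: Kirchhoff's current law at $u$ gives $\sum_{x\in N(u)}(\phi(u)-\phi(x))=1$, and substitution yields the sum rule after multiplying by $2$. One small bonus worth noting: replacing $d(u)$ by $\sum_{x\in N(u)}1/r_{ux}$ and $\phi(a)-\phi(b)$ by $(\phi(a)-\phi(b))/r_{ab}$ in your argument proves the weighted version (the paper's Theorem~\ref{local-2}) with no extra work, so your approach is strictly more general than what the statement requires.
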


Theorem \ref{local} offers a complete set of local rules
that can determine all resistance distances of a graph.
The following two theorems can be viewed as special cases of Theorem \ref{local}.

\begin{theorem}[\cite{Gervacio}]\label{cor1}
Let $G$ be a graph with two non-adjacent vertices $a$ and $b$ such that $N(a)=N(b)$.
Then $R_{a, b}=\frac{2}{|N(a)|}$.
\end{theorem}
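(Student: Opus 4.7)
The plan is to specialize Theorem \ref{local} to the pair $(u,v) = (a,b)$ and exploit the evident symmetry between $a$ and $b$. The crux is the observation that the transposition $\sigma$ swapping $a$ and $b$ and fixing every other vertex is an automorphism of $G$: any $x \notin \{a,b\}$ is adjacent to $a$ iff $x \in N(a) = N(b)$ iff $x$ is adjacent to $b$, and since $a \not\sim b$ there is no edge $ab$ to worry about. Because effective resistance is invariant under graph automorphisms, this yields $R_{ax} = R_{\sigma(a)\sigma(x)} = R_{bx}$ for every $x \in V(G) \setminus \{a,b\}$, and in particular for every $x \in N(a)$.

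With these identities in hand, applying Theorem \ref{local} with $u = a$ and $v = b$ gives
\[
d(a) R_{ab} + \sum_{x \in N(a)} (R_{ax} - R_{bx}) = 2,
\]
where every summand in the sum vanishes by the preceding paragraph. Therefore $d(a) R_{ab} = 2$, and since $d(a) = |N(a)|$ in the (loopless, simple) graph $G$, we conclude $R_{ab} = 2/|N(a)|$, as claimed.

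I do not anticipate any real obstacle in carrying out this plan: once the automorphism-induced equalities $R_{ax} = R_{bx}$ are recorded, the conclusion follows in one line from the local sum rule. The only mild points to watch are that the hypothesis $a \not\sim b$ is exactly what ensures $\sigma$ preserves the edge set (otherwise the edge $ab$ itself would spoil the argument), and that $R_{ab}$ is well-defined, which holds as soon as $a$ and $b$ lie in the same component of $G$; this is automatic whenever $N(a) = N(b)$ is nonempty, which is the only case in which the claimed formula is even meaningful.
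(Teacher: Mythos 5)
Your proof is correct and follows exactly the route the paper indicates: the paper gives no written proof but remarks that Theorem \ref{cor1} is a special case of Theorem \ref{local}, and your argument (instantiate the local sum rule at $(a,b)$ and kill the sum via the automorphism swapping $a$ and $b$, which exists precisely because $N(a)=N(b)$ and $a\not\sim b$) is the natural way to fill in that remark.
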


\begin{theorem}\label{cor2}
Let $G$ be a graph with two adjacent vertices $a$ and $b$ such that $N(a)=N(b)$.
Then $R_{a, b}=\frac{2}{|N(a)|+1}$.
\end{theorem}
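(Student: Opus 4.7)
The plan is to deduce Theorem \ref{cor2} directly from the local sum rule (Theorem \ref{local}) applied at $u=a$, mirroring the derivation of Theorem \ref{cor1} but keeping track of one extra term coming from the edge $ab$ itself. First I would observe that the hypothesis ``$N(a)=N(b)$'' for adjacent $a,b$ cannot be read literally in a loopless graph (it would force $b\in N(b)$); the intended reading is $N[a]=N[b]$, or equivalently $N(a)\setminus\{b\}=N(b)\setminus\{a\}$. In particular $d(a)=d(b)$, and transposing $a$ and $b$ while fixing every other vertex is an automorphism of $G$ that preserves the electrical-network structure.

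Because effective resistance is invariant under such an automorphism, this gives $R_{ax}=R_{bx}$ for every common neighbor $x\in N(a)\setminus\{b\}$. I would now apply Theorem \ref{local} with $u=a$ and split the sum according to whether $x=b$ or $x$ is a common neighbor:
\begin{equation*}
\sum_{x\in N(a)}\bigl(R_{ax}-R_{bx}\bigr)=\sum_{x\in N(a)\setminus\{b\}}\bigl(R_{ax}-R_{bx}\bigr)+\bigl(R_{ab}-R_{bb}\bigr).
\end{equation*}
The first sum vanishes by the twin symmetry, and $R_{bb}=0$, so the entire right-hand side collapses to $R_{ab}$. Substituting back into Theorem \ref{local} yields $d(a)R_{ab}+R_{ab}=2$, and hence
\begin{equation*}
R_{ab}=\frac{2}{|N(a)|+1},
\end{equation*}
as claimed.

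The only genuine subtlety is the mild abuse of notation in the hypothesis; once the twin condition is interpreted correctly the argument is a one-line specialisation of Theorem \ref{local}. The only thing one might fear --- having to compute the individual resistances $R_{ax}$ --- is bypassed entirely, because the automorphism makes every term in the sum pair off and only the isolated contribution from $x=b$ survives.
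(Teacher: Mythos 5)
Your proof is correct and is essentially the argument the paper intends: the paper gives no explicit proof of Theorem \ref{cor2}, saying only that it ``can be viewed as a special case of Theorem \ref{local}'', and your application of that local rule at $u=a$, with the twin symmetry annihilating every term except the $x=b$ contribution $R_{ab}-R_{bb}=R_{ab}$, is precisely that specialisation. Your observation that the hypothesis must be read as $N(a)\setminus\{b\}=N(b)\setminus\{a\}$ (so that $|N(a)|=d(a)$ counts $b$) is also the correct reading of the paper's statement.
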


Similar to Foster's Theorem, Chen and Zhang's local rules
can also be extended to graphs with arbitrary resistances.

\begin{theorem}[\cite{Chen}]\label{local-2}
Let $G$ be a simple connected graph with $n$ vertices. Each edge $ij$
has resistance $r_{ij}$. For any two vertices $u$ and $v$ of $G$,
$$\left(\sum_{x\in N(u)} \frac{1}{r_{ux}}\right)R_{uv}+\sum_{x\in N(u)} \frac{1}{r_{ux}}(R_{ux}-R_{vx})=2,$$
where $N(u)$ is the set of neighbors of $u$ in $G$.
\end{theorem}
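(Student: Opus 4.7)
The plan is to mirror the electrical-network reasoning used in the proofs of Theorems~\ref{mathcing-ratio} and~\ref{tree-ratio}, but applied locally rather than to compute a single effective resistance. I would inject one unit of current at $u$ and extract it at $v$, producing a potential function $\phi$ on $V(G)$. By the definition of effective resistance we have $\phi(u)-\phi(v)=R_{uv}$. Ohm's law (Theorem~\ref{Ohm}) says the current through edge $ux$ from $u$ to $x$ equals $(\phi(u)-\phi(x))/r_{ux}$, and Kirchhoff's current law (Theorem~\ref{current}) applied at $u$ then gives
$$
\sum_{x\in N(u)}\frac{\phi(u)-\phi(x)}{r_{ux}}=1.
$$

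The heart of the argument will be a cosine-law type identity for potentials, namely
$$
\phi(u)-\phi(x)=\tfrac12\bigl(R_{uv}+R_{ux}-R_{vx}\bigr)\qquad\text{for every }x\in V(G).
$$
There are two natural routes to this. The cleanest uses the Moore--Penrose pseudoinverse $L^+$ of the weighted Laplacian: the unit-flow potential satisfies $\phi(k)=L^+_{ku}-L^+_{kv}$ while resistance distances satisfy $R_{ij}=L^+_{ii}+L^+_{jj}-2L^+_{ij}$, and substituting these expressions with the diagonal terms canceling produces the identity in a few lines. Alternatively I could proceed by superposition: letting $\phi_{uv}$, $\phi_{ux}$, $\phi_{vx}$ denote the potentials induced by unit currents flowing between the indicated vertex pairs, the flow $\phi_{ux}-\phi_{vx}$ has the same net injection pattern at every vertex as $\phi_{uv}$, so the two differ by an additive constant that can be pinned down using the definition of effective resistance evaluated at $u$, $v$, and $x$.

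Inserting the cosine-law identity into the Kirchhoff equation at $u$ and multiplying through by $2$ would give
$$
R_{uv}\sum_{x\in N(u)}\frac{1}{r_{ux}}+\sum_{x\in N(u)}\frac{1}{r_{ux}}(R_{ux}-R_{vx})=2,
$$
which is precisely the assertion. The hard part will be establishing the cosine-law identity cleanly; neither route is technically deep, but the superposition route demands careful bookkeeping of the additive constant (one must argue that the uniqueness of the potential up to a constant, given the net-injection pattern, is the right tool), while the pseudoinverse route requires first setting up the spectral framework for the weighted Laplacian. Once the identity is in hand, the remainder of the argument is purely arithmetic. As a sanity check, specializing to $r_{ux}=1$ recovers Theorem~\ref{local}, since then $\sum_{x\in N(u)}1=d(u)$.
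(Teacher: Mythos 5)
The paper does not prove this statement at all --- it is quoted from \cite{Chen} as a known result, so there is no in-paper argument to compare yours against. Judged on its own merits, your plan is correct and is essentially the standard proof of the local sum rule. The reduction is exactly right: Kirchhoff's current law at $u$ for the unit $u$-to-$v$ flow gives $\sum_{x\in N(u)}\frac{\phi(u)-\phi(x)}{r_{ux}}=1$, and the identity $\phi(u)-\phi(x)=\frac12\left(R_{uv}+R_{ux}-R_{vx}\right)$ converts this into the claimed formula after multiplying by $2$. Your pseudoinverse route to that identity checks out immediately: with $\phi=L^{+}(e_u-e_v)$ one has $\phi(u)-\phi(x)=L^{+}_{uu}-L^{+}_{uv}-L^{+}_{xu}+L^{+}_{xv}$, while $\frac12\left(R_{uv}+R_{ux}-R_{vx}\right)$ expands via $R_{ij}=L^{+}_{ii}+L^{+}_{jj}-2L^{+}_{ij}$ to the same expression, the diagonal terms $L^{+}_{vv}$ and $L^{+}_{xx}$ cancelling as you say. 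The superposition route works too, and the constant is pinned down exactly as you describe (uniqueness of the potential up to an additive constant for a fixed net-injection vector on a connected graph). The only items you should make explicit in a full write-up are (i) that $e_u-e_v$ lies in the column space of the weighted Laplacian because $G$ is connected, so the unit flow and its potential exist, and (ii) the degenerate term $x=v$ when $v\in N(u)$, which your identity handles correctly since $R_{vv}=0$. Your sanity check against Theorem \ref{local} is also the right one.
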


\subsection{Effective resistances and the number of spanning trees of $G(m,n,p)$
\label{sec5-2}}

Let $G(m,n,p)$ denote the nearly complete bipartite graph
$K_{m,n}-pK_2$ with unit resistance on each edge,
where $p\le \min\{m,n\}$.
Assume that the vertex set of $G(m,n,p)$ is
$\{x_1, x_2, \ldots, x_m\} \cup \{y_1, y_2, \ldots, y_n\}$
and its edge set
$\{x_iy_j: 1\leq i\leq m, 1\leq j\leq n\}\backslash \{x_1y_1, x_2y_2, \ldots, x_py_p\}$.

For convenience, let $[k]=\{1, 2, \ldots, k\}$
if $k\in \mathbb{N}^+$ and $[k]=\emptyset$ if $k=0$.

Now we state our main result of this section as follows.

\begin{theorem}\label{resistance}
Assume that $G(m,n,p)=K_{m,n}-pK_2$ has a
unit resistance on each edge.
Then the following conclusions hold:
\begin{enumerate}
\item[(1)] for $1\le i,j\le m$,
\begin{displaymath}
R_{x_ix_j} = \left\{ \begin{array}{ll}
\frac{2(m-1)}{mn-m-n}, &\textrm{if $i\neq j$ and $i,j\in [p]$}, \\
\frac{2}{n}, & \textrm{if $i\neq j$ and $i,j\in [m]\backslash [p]$}, \\
\frac{2n-1}{n(n-1)}+\frac{p-1}{p(n-1)(mn-m-n)}+\frac{n-p}{pn(n-1)(mn-m-n+p)},
& \textrm{if $i\in [p], j\in [m]\backslash [p]$.}
\end{array} \right.
\end{displaymath}

\item[(2)] for $1\le i,j\le n$,
\begin{displaymath}
R_{y_iy_j} = \left\{ \begin{array}{ll}
\frac{2(n-1)}{mn-m-n}, &\textrm{if $i\neq j$ and $i,j\in [p]$}, \\
\frac{2}{m}, &\textrm{if $i\neq j$ and $i,j\in [n]\backslash [p]$}, \\
\frac{2m-1}{m(m-1)}+\frac{p-1}{p(m-1)(mn-m-n)}+\frac{m-p}{pm(m-1)(mn-m-n+p)},
& \textrm{if $i\in [p], j\in [n]\backslash [p]$.}
\end{array} \right.
\end{displaymath}

\item[(3)] for $1\le i\le m$ and $1\le j\le n$,
\begin{displaymath}
R_{x_iy_j} = \left\{ \begin{array}{ll}
\frac{m+n}{mn-m-n}-\frac{mn}{(mn-m-n)(mn-m-n+p)},
&\textrm{if $i= j\in [p]$}, \\
\frac{m+n-2}{mn-m-n}-\frac{mn}{(mn-m-n)(mn-m-n+p)},
&\textrm{if $i\neq j$ and $i,j\in [p]$}, \\
\frac{1}{m}+\frac{(p-1)(m-1)}{p(mn-m-n)}+\frac{(m-p)(m-1)}{pm(mn-m-n+p)},
& \textrm{if $i\in [p], j\in [n]\backslash [p]$},\\
\frac{1}{n}+\frac{(p-1)(n-1)}{p(mn-m-n)}+\frac{(n-p)(n-1)}{pn(mn-m-n+p)},
& \textrm{if $i\in [m]\backslash [p], j\in [p]$}, \\
\frac{1}{m}+\frac{1}{n}-\frac{mn-m-n}{mn(mn-m-n+p)},
& \textrm{if $i\in [m]\backslash [p], j\in [n]\backslash [p]$.}
\end{array} \right.
\end{displaymath}
\end{enumerate}
\end{theorem}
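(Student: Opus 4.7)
The plan is to exploit the automorphism group $S_p \times S_{m-p} \times S_{n-p}$ of $G(m,n,p)$, which simultaneously permutes the matched pairs $(x_i,y_i)$, $i \in [p]$, and independently permutes the unmatched vertices on each side. This group has exactly eleven orbits on unordered pairs of distinct vertices, matching the eleven cases in the statement, so there are at most eleven distinct resistance values to determine.

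Two of these orbits consist of twin (non-adjacent, same-neighborhood) vertex pairs: $\{x_i,x_j\}$ with $i,j \in [m]\setminus[p]$ share $N(\cdot)=Y$, and similarly $\{y_i,y_j\}$ with $i,j \in [n]\setminus[p]$. Theorem \ref{cor1} immediately yields $R_{x_ix_j}=2/n$ and $R_{y_iy_j}=2/m$, disposing of the second line of parts (1) and (2) and leaving nine unknowns.

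For the remaining values I would apply the Chen--Zhang local rule (Theorem \ref{local}) at pairs $(u,v)$ chosen so that the sum $\sum_{x \in N(u)}(R_{ux}-R_{vx})$ collapses to a handful of terms by the group action. For instance, with $u=x_1$ and $v=x_2$ (both in $[p]$), the transposition swapping $(x_1,y_1)\leftrightarrow(x_2,y_2)$ fixes every $y_j$ with $j \notin \{1,2\}$, so almost all differences cancel and the identity reduces to
\begin{equation*}
(n-1)R_{x_1x_2} + R_{x_1y_2} - R_{x_2y_2} = 2,
\end{equation*}
a three-term relation among orbit values. Running $u \in \{x_1,x_{p+1},y_1,y_{p+1}\}$ against orbit representatives $v$ generates enough such identities to pin down the nine unknowns; Foster's theorem (Theorem \ref{global}) supplies a global cross-check, and the weighted variants in Section~\ref{sec5-1} are on hand if any reduction (for example, merging the two parallel edges created when vertices in the same matched pair are contracted) is convenient.

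The main obstacle is purely algebraic: solving the resulting coupled linear system while keeping the closed form tidy, so that the two denominators $mn-m-n$ and $mn-m-n+p$ emerge in precisely the compact partial-fraction structure displayed. I would guide the algebra using two sanity checks --- the specialization $p=0$ must recover the classical values for $K_{m,n}$ (in particular $R_{x_iy_j}=\tfrac{1}{m}+\tfrac{1}{n}-\tfrac{1}{mn}$ and $R_{x_ix_j}=2/n$), and the case $m=n$, $p=n$ must agree with Ye and Yan's \cite{YY} formulas for $K_{n,n}-nK_2$. Once the theorem is established, Theorem \ref{nearly} follows as a corollary via Theorem \ref{res-spanning} applied to any single edge: one has $\tau(G(m,n,p)) = \tau(G(m,n,p)/e)/R_e$, and the contracted graph can in turn be analyzed using Theorem \ref{matching}.
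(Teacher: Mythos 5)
Your proposal follows essentially the same route as the paper: reduce to eleven orbit values by symmetry, dispatch the two twin-pair cases with Theorem \ref{cor1}, and determine the remaining values by solving the linear system produced by applying the Chen--Zhang local rule (Theorem \ref{local}) at representatives of each orbit --- your sample identity $(n-1)R_{x_1x_2}+R_{x_1y_2}-R_{x_2y_2}=2$ is literally the paper's first equation. The only cosmetic difference is that the paper uses Foster's theorem (Theorem \ref{global}) as one of the equations needed to close the system rather than as a mere cross-check.
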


\begin{proof}
By symmetry of $G(m, n, p)$, we set
\begin{equation*}\begin{split}
& r_1=R_{x_ix_j}, ~\textrm{for $i\neq j$ and $i,j\in [p]$}, \\
& r_2=R_{y_iy_j}, ~\textrm{for $i\neq j$ and $i,j\in [p]$}, \\
& r_3=R_{x_ix_j}, ~\textrm{for $i\neq j$ and $i,j\in [m]\backslash [p]$}, \\
& r_4=R_{y_iy_j}, ~\textrm{for $i\neq j$ and $i,j\in [n]\backslash [p]$}, \\
& r_5=R_{x_iy_i}, ~\textrm{for $i\in [p]$}, \\
& r_6=R_{x_iy_j}, ~\textrm{for $i\neq j$ and $i,j\in [p]$}, \\
& r_7=R_{x_ix_j}, ~\textrm{for $i\in [p], j\in [m]\backslash [p]$}, \\
& r_8=R_{x_iy_j}, ~\textrm{for $i\in [p], j\in [n]\backslash [p]$}, \\
& r_9=R_{x_iy_j}, ~\textrm{for $i\in [m]\backslash [p], j\in [p]$}, \\
& r_{10}=R_{y_iy_j}, ~\textrm{for $i\in [p], j\in [n]\backslash [p]$}, \\
& r_{11}=R_{x_iy_j}, ~\textrm{for $i\in [m]\backslash [p], j\in [n]\backslash [p]$}.
\end{split}\end{equation*}

\setcounter{claim}{0}
\begin{claim}
$r_3=\frac{2}{n}$, $r_4=\frac{2}{m}$.
\end{claim}
\begin{proof}
Claim 1 follows directly from Theorem \ref{cor1}.
\end{proof}

\begin{claim}
$r_1=\frac{2(m-1)}{mn-m-n}$, $r_2=\frac{2(n-1)}{mn-m-n}$.
\end{claim}
\begin{proof}
Applying Theorem \ref{local} to $\{x_1, x_2\}$ we obtain
\begin{align}\label{1}
(n-1)r_1+(r_6-r_5)=2.
\end{align}
Applying Theorem \ref{local} to $\{y_1, y_2\}$ we obtain
\begin{align}\label{2}
(m-1)r_2+(r_6-r_5)=2.
\end{align}
From (\ref{1}) and (\ref{2}) we have
\begin{align}\label{3}
r_1=\frac{m-1}{n-1}r_2.
\end{align}

Applying Theorem \ref{local} to $\{x_1, y_1\}$ we obtain the following
two equations.
\begin{align}\label{4}
(n-1)r_5+(p-1)(r_6-r_2)+(n-p)(r_8-r_{10})=2.
\end{align}
\begin{align}\label{5}
(m-1)r_5+(p-1)(r_6-r_1)+(m-p)(r_9-r_7)=2.
\end{align}
Applying Theorem \ref{local} to $\{x_1, y_2\}$
leads to two equations, we choose the following one of them.
\begin{align}\label{6}
(n-1)r_6+(r_6-0)+(p-2)(r_6-r_2)+(n-p)(r_8-r_{10})=2.
\end{align}
$(\ref{6})-(\ref{4})$ we obtain
\begin{align}\label{7}
(n-1)(r_6-r_5)+r_2=0.
\end{align}
Combine (\ref{2}) and (\ref{7}) we have
$$r_2=\frac{2(n-1)}{mn-m-n}.$$
Then from (\ref{3}) we can see that
$$r_1=\frac{2(m-1)}{mn-m-n}.$$
\end{proof}

Now we turn to the rest effective resistances.
Since we have known the exact value of $r_1$ and $r_2$,
(\ref{1}), (\ref{4}) and (\ref{5}) become
\begin{align}\label{8}
r_5-r_6=\frac{2}{mn-m-n},
\end{align}
\begin{align}\label{9}
(n-1)r_5+(p-1)r_6+(n-p)(r_8-r_{10})=2+\frac{2(n-1)(p-1)}{mn-m-n},
\end{align}
and
\begin{align}\label{10}
(m-1)r_5+(p-1)r_6+(m-p)(r_9-r_7)=2+\frac{2(m-1)(p-1)}{mn-m-n}.
\end{align}

Applying Theorem \ref{local} to $\{x_1, x_{p+1}\}$ we obtain the following
two equations.
$$(n-1)r_7+(p-1)(r_6-r_9)+(n-p)(r_8-r_{11})=2.$$
$$nr_7+(r_9-r_5)+(p-1)(r_9-r_6)+(n-p)(r_{11}-r_8)=2.$$
Add them up we obtain
\begin{align}\label{11}
(2n-1)r_7+r_9-r_5=4.
\end{align}

Applying Theorem \ref{local} to $\{y_1, y_{p+1}\}$ we obtain the following
two equations.
$$(m-1)r_{10}+(p-1)(r_6-r_8)+(m-p)(r_9-r_{11})=2.$$
$$mr_{10}+(r_8-r_5)+(p-1)(r_8-r_6)+(m-p)(r_{11}-r_9)=2.$$
Add them up we obtain
\begin{align}\label{12}
(2m-1)r_{10}+r_8-r_5=4.
\end{align}

Applying Theorem \ref{local} to $\{x_1, y_{p+1}\}$ we obtain
$$(n-1)r_8+(p-1)(r_6-r_{10})+(r_8-0)+(n-p-1)(r_8-r_4)=2,$$
that is,
\begin{align}\label{13}
(p-1)(r_6-r_{10})+(2n-p-1)r_8=\frac{2(m+n-p-1)}{m}.
\end{align}

Foster's Theorem (Theorem (\ref{global})) shows
\begin{align}\label{14}
p(p-1)r_6+p(n-p)r_8+p(m-p)r_9+(m-p)(n-p)r_{11}=m+n-1.
\end{align}

Solving the system of linear equations consists of
Eq. (8)--(14), we get the following results we need:
\begin{displaymath}
\left\{ \begin{array}{ll}
r_5=\frac{m+n}{mn-m-n}-\frac{mn}{(mn-m-n)(mn-m-n+p)}, \\
r_6=\frac{m+n-2}{mn-m-n}-\frac{mn}{(mn-m-n)(mn-m-n+p)}, \\
r_7=\frac{2n-1}{n(n-1)}+\frac{p-1}{p(n-1)(mn-m-n)}+\frac{n-p}{pn(n-1)(mn-m-n+p)}, \\
r_8=\frac{1}{m}+\frac{(p-1)(m-1)}{p(mn-m-n)}+\frac{(m-p)(m-1)}{pm(mn-m-n+p)}, \\
r_9=\frac{1}{n}+\frac{(p-1)(n-1)}{p(mn-m-n)}+\frac{(n-p)(n-1)}{pn(mn-m-n+p)}, \\
r_{10}=\frac{2m-1}{m(m-1)}+\frac{p-1}{p(m-1)(mn-m-n)}+\frac{m-p}{pm(m-1)(mn-m-n+p)}, \\
r_{11}=\frac{1}{m}+\frac{1}{n}-\frac{mn-m-n}{mn(mn-m-n+p)}.
\end{array} \right.
\end{displaymath}
\end{proof}

Now we turn to the number of spanning trees of $G(m,n,p)$ and prove Theorem \ref{nearly}.

\noindent
{\bf Proof of Theorem \ref{nearly}.} Recall Theorem \ref{res-spanning},
we have $\frac{\tau(G(m,n,p)/x_{p+1}y_{p+1})}{\tau(G(m,n,p))}=r_{11}$,
implying that
\begin{eqnarray*}
\ \frac{\tau(G(m,n,p+1))}{\tau(G(m,n,p))}
& = & \frac{\tau(G(m,n,p))-\tau(G(m,n,p)/x_{p+1}y_{p+1})}{\tau(G(m,n,p))} \\
& = & 1-r_{11} \\
& = & 1-\frac{1}{m}-\frac{1}{n}+\frac{mn-m-n}{mn(mn-m-n+p)} \\
& = & \frac{(mn-m-n)(mn-m-n+p+1)}{mn(mn-m-n+p)}.
\end{eqnarray*}

Therefore,
\begin{eqnarray*}
\ \tau(G(m,n,p))
& = & \left(\prod_{k=0}^{p-1} \frac{\tau(G(m,n,k+1))}{\tau(G(m,n,k))}\right) \cdot\tau(G(m,n,0))\\
& = & \frac{(mn-m-n)^p(mn-m-n+p)}{m^pn^p(mn-m-n)}\cdot\tau(K_{m,n}) \\
& = & (mn-m-n+p)(mn-m-n)^{p-1}m^{n-p-1}n^{m-p-1}.
\end{eqnarray*}
{\hfill$\Box$}

\subsection{Kirchhoff indices of $G(m,n,p)$
\label{sec5-3}}

In 2016, Shi and Chen obtained the Kirchhoff index of $G(n,n,p)=K_{n,n}-pK_2$ as follows.

\begin{theorem}[\cite{SC}]\label{Shi-Chen}
Let $G$ be a graph constructed by removing $p$ disjoint edges
from the complete bipartite graph $K_{n,n} (p\leq n)$. Then
$$Kf(G)=\frac{np(2n^2-5n+2p)}{(n-2)(n^2-2n+p)}+\frac{(n-p)(2n^2-5n+2p+2)}{n^2-2n+p}+2(n-1).$$
Particularly, if $p = n$, we have
$$Kf(G)=\frac{5n-6}{(n-1)(n-2)}+4n+1.$$
\end{theorem}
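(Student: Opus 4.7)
The plan is to derive Shi and Chen's formula as the specialization $m=n$ of a direct pair-by-pair summation of all effective resistances in $G(n,n,p)$, using the closed-form table already established in Theorem~\ref{resistance}.

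First I would substitute $m=n$ throughout Theorem~\ref{resistance}. The extra symmetry between $X$ and $Y$ collapses the eleven entries of the table into seven distinct values: $r_1=r_2$, $r_3=r_4$, $r_7=r_{10}$, $r_8=r_9$, together with the self-symmetric $r_5$, $r_6$, and $r_{11}$. Each becomes a rational function in $n$ and $p$ whose denominator is a product of factors drawn from $\{n-2,\ n^2-2n+p\}$.

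Next I would partition the $\binom{2n}{2}$ unordered vertex pairs along the natural four-block decomposition of the vertex set induced by the missing matching and the bipartition, then add the contributions type by type. After identifying the $X$-side and $Y$-side contributions using $r_1=r_2$, $r_3=r_4$, $r_7=r_{10}$ and using $r_8=r_9$ on the cross pairs, this yields
\begin{align*}
Kf(G(n,n,p)) &= p(p-1)\,r_1 + (n-p)(n-p-1)\,r_3 + 2p(n-p)\,r_7 \\
&\quad + p\,r_5 + p(p-1)\,r_6 + 2p(n-p)\,r_8 + (n-p)^2\,r_{11}.
\end{align*}
A bookkeeping check verifies that the cross-pair multiplicities total $p+p(p-1)+2p(n-p)+(n-p)^2=n^2=|X|\cdot|Y|$, and the within-side multiplicities add up to $2\binom{n}{2}=n(n-1)$ as required.

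The main obstacle will be the algebraic consolidation of this sum into the three declared summands $np(2n^2-5n+2p)/[(n-2)(n^2-2n+p)]$, $(n-p)(2n^2-5n+2p+2)/(n^2-2n+p)$, and $2(n-1)$. Since the individual $r_i$ mix both denominators $n-2$ and $n^2-2n+p$ and the multiplicities are quadratic in $n$ and $p$, the natural route is to clear to the common denominator $(n-2)(n^2-2n+p)$, expand in a monomial basis of $n$ and $p$, and collect. The boundary case $p=n$ then follows by setting the $(n-p)$-factors to zero, reducing the expression to $p(p-1)\,r_1+p\,r_5+p(p-1)\,r_6$, which after simplification matches $(5n-6)/[(n-1)(n-2)]+4n+1$; the toy case $G(3,3,3)=C_6$ (for which $Kf=35/2$) provides a fast sanity check. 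The difficulty is therefore purely computational: one long but routine common-denominator simplification.
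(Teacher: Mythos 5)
Your proposal is correct and takes essentially the same route as the paper: the paper proves the general $G(m,n,p)$ Kirchhoff index (Corollary~\ref{Kirchhoff}) by exactly this pair-type summation over the resistance table of Theorem~\ref{resistance}, and Theorem~\ref{Shi-Chen} is the $m=n$ specialization. The one difference is that the paper collapses all adjacent pairs at once via Foster's theorem ($\sum_{uv\in E(G)}R_{uv}=m+n-1$), so the values $r_6$, $r_8$, $r_9$, $r_{11}$ never enter the sum and the final common-denominator computation is noticeably shorter than the one you outline.
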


Since we have obtained effective resistances for $G(m,n,p)$,
the Kirchhoff index of $G(m,n,p)$ is an obvious corollary of Theorem \ref{resistance},
which extends Shi and Chen's results.

\begin{corollary}\label{Kirchhoff}
\begin{eqnarray*}
\ Kf(G(m,n,p)) & = & m+n-1+\frac{p^2(m+n-2)+2p}{mn-m-n}+\frac{(m-p)(m-1)}{n}+\frac{(n-p)(n-1)}{m} \\
&   & -\frac{pmn}{(mn-m-n)(mn-m-n+p)}+\frac{p(m-p)}{n-1}+\frac{(m-p)(p-1)}{(n-1)(mn-m-n)} \\
&   & +\frac{(m-p)(n-p)}{n(n-1)(mn-m-n+p)}+\frac{p(n-p)}{m-1}+\frac{(n-p)(p-1)}{(m-1)(mn-m-n)} \\
&   & +\frac{(m-p)(n-p)}{m(m-1)(mn-m-n+p)}.
\end{eqnarray*}
\end{corollary}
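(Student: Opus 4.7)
The plan is straightforward: by definition, $Kf(G(m,n,p))=\sum_{\{u,v\}} R_{uv}$, so I will partition the unordered pairs of vertices of $G(m,n,p)$ according to the case distinction used in Theorem \ref{resistance}, count the pairs in each class, and sum the corresponding resistances from the eleven values $r_1,\ldots,r_{11}$ computed in the proof of Theorem \ref{resistance}.

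Write $A_X=\{x_1,\ldots,x_p\}$, $B_X=\{x_{p+1},\ldots,x_m\}$, $A_Y=\{y_1,\ldots,y_p\}$, $B_Y=\{y_{p+1},\ldots,y_n\}$. The pairs split into exactly eleven symmetry classes, matching the eleven cases of Theorem \ref{resistance}. The counts are: $\binom{p}{2}$ pairs inside $A_X$ (contribution $r_1$); $\binom{m-p}{2}$ inside $B_X$ (contribution $r_3$); $p(m-p)$ between $A_X$ and $B_X$ (contribution $r_7$); analogously $\binom{p}{2}$, $\binom{n-p}{2}$, $p(n-p)$ on the $Y$-side for $r_2$, $r_4$, $r_{10}$; and on the bipartite side $p$ matched pairs $x_iy_i$ with $i\in[p]$ (contribution $r_5$), $p(p-1)$ unmatched pairs within $A_X\times A_Y$ (contribution $r_6$), $p(n-p)$ pairs in $A_X\times B_Y$ (contribution $r_8$), $(m-p)p$ pairs in $B_X\times A_Y$ (contribution $r_9$), and $(m-p)(n-p)$ pairs in $B_X\times B_Y$ (contribution $r_{11}$). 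Thus
\begin{align*}
Kf(G(m,n,p)) &= \tbinom{p}{2}r_1+\tbinom{m-p}{2}r_3+p(m-p)r_7+\tbinom{p}{2}r_2+\tbinom{n-p}{2}r_4+p(n-p)r_{10}\\
&\quad +pr_5+p(p-1)r_6+p(n-p)r_8+(m-p)p\,r_9+(m-p)(n-p)r_{11}.
\end{align*}

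Next I substitute the explicit expressions for $r_1,\ldots,r_{11}$ recorded at the end of the proof of Theorem \ref{resistance} and collect like terms. It is convenient to group the contributions by denominator: the terms proportional to $1/(mn-m-n)$ coming from $r_1,r_2,r_5,r_6,r_7,\ldots$, the terms proportional to $1/(mn-m-n+p)$, the pure $1/m$, $1/n$, $1/(m-1)$, $1/(n-1)$ pieces, and the constant $m+n-1$ piece which will arise from Foster's Theorem applied implicitly through $r_{11}$. After combining, every expression in the statement of Corollary \ref{Kirchhoff} should appear naturally from one of these groups.

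The only real obstacle is the bookkeeping of the algebraic simplification: there are eleven resistances, several of them with three summands each, and the simplification must be done carefully to match the stated form. I expect no conceptual difficulty; a useful sanity check is that $\sum_{ij\in E(G(m,n,p))} R_{ij}=m+n-1$ by Foster's Theorem (Theorem \ref{global}), which involves only the edge-pair resistances $r_6, r_8, r_9, r_{11}$ weighted by the number of edges in each class, and can be used to cross-verify the arithmetic before writing down the final expression.
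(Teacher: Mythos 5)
Your proposal is correct and matches the paper's proof: the paper likewise partitions the vertex pairs into these symmetry classes, uses Foster's Theorem to replace the edge-pair contribution $p(p-1)r_6+p(n-p)r_8+p(m-p)r_9+(m-p)(n-p)r_{11}$ by $m+n-1$, and then sums $\binom{p}{2}r_1+\binom{p}{2}r_2+\binom{m-p}{2}r_3+\binom{n-p}{2}r_4+pr_5+p(m-p)r_7+p(n-p)r_{10}$ over the non-edge pairs. Your class counts are all correct, so the remaining work is exactly the algebraic bookkeeping you describe.
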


\begin{proof}
\begin{eqnarray*}
\ Kf(G(m,n,p)) & = & \sum_{\{u, v\}: u, v\in V(G)} R_{uv} \\
& = & \sum_{\{u, v\}: uv\in E(G)} R_{uv}+\sum_{\{u, v\}: uv\notin E(G)} R_{uv} \\
& = & m+n-1+\binom{p}{2}r_1+\binom{p}{2}r_2+\binom{m-p}{2}r_3+\binom{n-p}{2}r_4+pr_5 \\
&   & +p(m-p)r_7+p(n-p)r_{10} \\
& = & m+n-1+\frac{p^2(m+n-2)+2p}{mn-m-n}+\frac{(m-p)(m-1)}{n}+\frac{(n-p)(n-1)}{m} \\
&   & -\frac{pmn}{(mn-m-n)(mn-m-n+p)}+\frac{p(m-p)}{n-1}+\frac{(m-p)(p-1)}{(n-1)(mn-m-n)} \\
&   & +\frac{(m-p)(n-p)}{n(n-1)(mn-m-n+p)}+\frac{p(n-p)}{m-1}+\frac{(n-p)(p-1)}{(m-1)(mn-m-n)} \\
&   & +\frac{(m-p)(n-p)}{m(m-1)(mn-m-n+p)}.
\end{eqnarray*}
\end{proof}

\section*{Acknowledgements}

Jun Ge is partially supported by NSFC (No. 11701401) and China Scholarship Council (No. 201708515087).

\end{document}